\documentclass[12pt,a4paper]{article}
\usepackage[english]{babel}
\usepackage[utf8]{inputenc}
\usepackage{amsmath,amssymb,amsthm,graphicx}
\usepackage{geometry}
\usepackage[all]{xy}
\usepackage{tikz}

\title{Stochastic modelling of Gaussian processes by improper linear functionals}

\author{Niels Lundtorp Olsen \\ Department of Applied Mathematics and Computer Science \\ 
	Technical University of Denmark}

\newcommand{\vektor}[1]{\begin{pmatrix} #1 \end{pmatrix}}
\newcommand{\eb}{\Leftrightarrow}
\newcommand{\pil}{\rightarrow}

\newcommand{\citep}{\cite}

\newcommand{\M}[1]{\mathbb{#1}}
\newcommand{\R}{\mathbb{R}}

\newcommand{\Hi}{\mathbb{H}}
\newcommand{\mb}{\mathcal{B}}
\newcommand{\iid}{i.i.d.\ }
\newcommand{\mvl}{mv.\ logL}

\newcommand{\for}{\text{ for }}

\newcommand{\brak}[1]{\langle #1 \rangle}
\newcommand{\bigbrak}[1]{\left\langle #1 \right\rangle}
\newcommand{\dotcomma}{, \dots ,}

\newcommand{\E}{\mathrm{E}}
\newcommand{\trans}{^\top \!} 
\newcommand{\I}{\mathbf{I}}

\newcommand{\de}{\: \mathrm{d}}
\newcommand{\inte}[4]{\int_{#1}^{#2} \! #3 \de \mathrm{#4} }

\DeclareMathOperator{\V}{V}
\DeclareMathOperator{\cov}{Cov}

\DeclareMathOperator*\uaf{\perp\mskip-11mu\perp}

\theoremstyle{definition}
\newtheorem{thm}{Theorem}
\theoremstyle{definition}

\theoremstyle{definition}
\newtheorem{defi}{Definition}
\theoremstyle{definition}
\newtheorem{proposition}[thm]{Proposition}
\theoremstyle{definition}
\newtheorem{lemma}[thm]{Lemma}
\theoremstyle{definition}

\theoremstyle{definition}
\newtheorem{eks}[thm]{Example}
\theoremstyle{definition}
\newtheorem{remark}[thm]{Remark}

\begin{document}
	
	\maketitle
	
	\begin{abstract}
Various approaches to stochastic processes exist, noting that key properties such as measurability and continuity are not trivially satisfied. 
We introduce a new theory for Gaussian processes using improper linear functionals. 
Using a collection of \iid standard normal variables, we define Gaussian white noise and discuss its properties. 
This is extended to general Gaussian processes on Hilbert space, where the variance is allowed to be any suitable operator. 
Our main focus is $L^2$ spaces, and we discuss criteria for Gaussian processes to be continuous in this setting. 
Finally, we outline a framework for statistical inference using the presented theory with focus on the special case of $L^2[0,1]$. We introduce the Fredholm determinant into the functional log-likelihood. 
We demonstrate that the naive functional log-likelihood is not consistent with the multivariate likelihood. A correction term is introduced, and we prove an asymptotical result.  
\end{abstract}

\medskip

\noindent%
{\it Keywords:} stochastic processes, white noise, $L^2$ spaces, functional data

\vfill

\newpage

\section{Introduction}	
The concept of treating a function as a random variable (ie. a \emph{stochastic process}) is central to many areas of statistics and probability theory, including diffusion theory, spatial statistics and functional data analysis. 
However, it is highly non-trivial to define a satisfactory mathematical framework for random variables on $\R^U$, for some continuous domain $U$, due to the uncountable index set.
Kolmogorov's extension theorem gives a general criterion for the existence of random variables on $\R^U$, but does not answer some of basic questions that naturally arise when these random variables are viewed as functions $U \pil \R$: when do have measurability, continuity etc?, which require additional assumptions. It is for instance well-known that a white noise process on $\R^U$ cannot be measurable  as a function  $U \pil \R$, which has motivated alternative definitions based on tempered distributions \citep{hidabog}. 
A good treatment of the theory of random variables on continuous domains can be found in \cite{taylorAdler}. 

The majority of work on stochastic processes has been done within the framework of Gaussian processes (ie. when all marginal distributions are Gaussian), or when Gaussian variables make good approximations (e.g. high-frequency diffusion processes).
That is not necessarily because we believe the Gaussianity to be true for actual applications, but Gaussian random variables are well understood and may often be what is feasible.

Gaussian processes are commonly defined as elements of 
a suitable Hilbert space. In functional data analysis (FDA), the default choice is the space of square-integrable functions, $L^2[U]$, or some suitable subset thereof, e.g.\ the set of bounded continuous functions $C(U)$. 
This for instance plays a significant role in popular FDA textbooks \citep{ramsay, horvathkokozka}.  

A related and important class of stochastic processes are \emph{diffusion processes}, with many diverse applications \cite{oksendal}.
A diffusion process can be interpreted as a process which "accumulates" white noise, adding randomness to ordinary differential equations.
This accumulation is not straightforwardly defined, and there are two competing approaches to stochastic integration, the \emph{Ito integral} (more common) and the \emph{Stratonovich integral}. 

A general result states that the covariance function of a continuous Gaussian process defined on an interval $[a,b]$, is a positive semi-definite and compact operator in $L^2[a,b]$. 
However, operators 'lack' two interesting properties that finite-dimensional operators possess: (1) non-invertability, (2) any suitable generalization of the determinant would give zero (as a consequence of (1)).
A generalisation of determinants to function spaces was introduced by Fredholm \citep{fredholm1903} as early as 1903. Whereas we do find references to the Fredholm determinant in the statistical literature, we have not seen it used as a 'penalisation' term like the determinant of a multivariate Gaussian likelihood. 

Many of the useful results about continuous stochastic processes can be found in \cite{berlinetAgnan}, which also discuss the association with \emph{reproducing kernel Hilbert space} (RHKS) a powerful concept with diverse applications.

Finally, we remark that data are more or less noisy, contrary to stochastic process models which generally assume smoothness or at least continuity. Although researchers acknowledge this fact, we believe this to be a relevant shortcoming in many cases. This is particularly relevant for asymptotical results on decreasing sampling distance, where noise is rarely considered.

\subsection{Contribution of this article}
This paper contributes to the foundational framework of random functions  with a new definition of Gaussian processes that has a mathematically quite simple definition.
The major innovation is to allow a Gaussian process to be an \emph{improper} functional. This 'relaxation' opens up new possibilities and interpretations of data, for 
instance the use of non-compact operators for statistical modelling, and we argue that this fits well with the actual, noisy nature of data.

The remainder of the paper is organised as follows: 
In Section \ref{gw-basic} we introduce the Gaussian white noise, which is extended to 'general' Gaussian processes in Section \ref{gaus-ops}. Section \ref{l2-sec} concerns Gaussian processes for $L^2$-spaces and discusses properties such as continuity.  
Finally, Sections \ref{inference-sec} and \ref{l201} outlines statistical inference. Section \ref{l201} considers the specific case of $L^2[0,1]$. Here we introduce the Fredholm determinant and prove asymptotical equivalence of the functional and multivariate Gaussian likelihoods. A short discussion on the findings of this article is provided in Section \ref{sect-disc}.

\begin{remark}[Notational conventions]
	All vector spaces in this article are assumed to be \emph{real vector spaces} as we work with 'real' data. Furthermore, all vector spaces shall be of countably infinite dimension unless otherwise stated, and $\brak{ \cdot, \cdot}$ is used to denote the inner product. In particular, all Hilbert spaces shall be separable.
	
	We will use the common abbreviations a.s.\ for '{almost surely}' and \iid for '{independent and identically distributed}'.
\end{remark}

\section{Gaussian white noise} \label{gw-basic}
In this section we introduce Gaussian white noise and show its basic properties. 
We will introduce the theory on general separable Hilbert spaces, but our main focus is  $L^2$-spaces, where we in Section \ref{gaus-ops} will use operators.

Notably,  instead of associating random variables elements in a Hilbert space, we shall associate random variables with a functional. We show that the functionals are improper almost surely.
This explicitly violates the basic assumptions of functional analysis and allow us to go beyond the limitations imposed by Riesz' representation theorem \citep{Riesz}. This theorem states that functions and functionals are dual properties 
-- there is a one-to-one mapping  $\phi: \Hi \pil \Hi^*$ given by: 
\[ \phi(g) =  \left[f \mapsto \brak{g,f}, f \in \Hi \right]
\]	
where 
$\Hi^*$ is the associated space of bounded linear functionals. This arguably limits the usefulness of functionals in statistical modelling, unless additional properties for $\Hi$ are considered.

\subsection{Gaussian white noise}
We begin by defining the concept of Gaussian noise on an infinite-dimensional Hilbert space. 

\begin{defi} \label{gauss-defi}
	Let $\Hi$ be a separable Hilbert space with an orthonormal basis $(e_i)_{i=1}^\infty$. Let $\{X_i\}_{i = 1}^\infty$ be a sequence of \iid $N(0,1)$-distributed random variables defined on some background probability space $(\Omega, \M{F}, P)$. 
	
	The \emph{Gaussian white noise} $W$ is defined as the random variable $W: \Hi \times \Omega \pil \R \cup \{\infty \}$
	\begin{equation}
	W(f)(\omega) = \sum_{i = 1}^\infty  \brak{ f, e_i} X_i(\omega), \quad f \in  \Hi, \omega \in \Omega
	\end{equation}
	with the convention that $W(f)(\omega) = \infty$ when the series does not converge. 
\end{defi}
Thus, the application of $W$ to a function $f\in \Hi$ outputs a Gaussian random variable -- and $W$ applied to a different function $g \in \Hi$ would output a different random variable. 
It holds that $(W(f) , W(g))$ is jointly Gaussian, and the covariance is easily inferred:

\begin{proposition} \label{prop-gauss}
	Let $W$ be a Gaussian white noise. Then:
	
	(a) $(W(f_1), \dots , W(f_n))$ follows a multivariate Gaussian distributed with mean zero and 
	\begin{equation}
	\cov(W(f_i), W(f_j)) = \brak{f_i, f_j}
	\end{equation}
	where $f_1, \dots, f_n \in \Hi$.
	
	(b) The distribution of $W$ is independent of the choice of basis for $\Hi$.
\end{proposition}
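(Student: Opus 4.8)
The plan is to treat part (a) as the substantive statement and derive part (b) as an immediate consequence. Before anything else, I would establish that $W(f)$ is a genuine (finite a.s.) random variable for each fixed $f \in \Hi$. Writing $S_N(f) = \sum_{i=1}^N \brak{f, e_i} X_i$ for the partial sums, the summands are independent mean-zero Gaussians with variances $\brak{f, e_i}^2$, and Parseval's identity gives $\sum_{i=1}^\infty \brak{f, e_i}^2 = \|f\|^2 < \infty$. Hence $(S_N(f))_N$ is $L^2(\Omega)$-Cauchy and converges both in $L^2$ and almost surely (by Kolmogorov's convergence criterion for sums of independent variables with summable variances), so $W(f)$ is finite a.s.\ and, being an $L^2$-limit of centred Gaussians, is itself $N(0, \|f\|^2)$.

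For part (a), the key observation is the linearity of $W$ in its functional argument: for scalars $a_1, \dots, a_n$, collecting terms gives $\sum_{k=1}^n a_k W(f_k) = W\!\left(\sum_{k=1}^n a_k f_k\right)$, since $\sum_k a_k f_k \in \Hi$ and the relevant series converge. By the preliminary step the right-hand side is univariate Gaussian. As every linear combination of $W(f_1), \dots, W(f_n)$ is thus one-dimensional Gaussian, the vector $(W(f_1), \dots, W(f_n))$ is by definition jointly Gaussian with mean zero. To compute the covariance I would pass to the limit in the partial sums: using $\E[X_k X_l] = \delta_{kl}$ gives $\E[S_N(f_i) S_N(f_j)] = \sum_{k=1}^N \brak{f_i, e_k}\brak{f_j, e_k}$, and letting $N \pil \infty$, the left-hand side converges to $\E[W(f_i) W(f_j)] = \cov(W(f_i), W(f_j))$ by $L^2$-continuity of the inner product on $L^2(\Omega)$, while the right-hand side converges to $\brak{f_i, f_j}$ by the polarised form of Parseval's identity.

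Part (b) then follows with little extra work. The law of $W$, regarded as a process indexed by $f \in \Hi$, is determined by its finite-dimensional distributions, and part (a) shows these to be centred multivariate Gaussians whose covariance matrices $(\brak{f_i, f_j})_{i,j}$ depend only on the inner products of the chosen functionals. Since inner products are intrinsic to $\Hi$ and do not reference any basis, two white noises built from different orthonormal bases (and their respective \iid standard normal sequences) share all finite-dimensional distributions and hence have the same law.

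The only genuine technical obstacle is the justification of the termwise manipulations -- convergence of the defining series and the interchange of expectation with the infinite sums in the covariance computation. Both are handled uniformly by the $L^2(\Omega)$ convergence of the partial sums $S_N(f)$, which is guaranteed by the summability $\sum_i \brak{f, e_i}^2 = \|f\|^2 < \infty$; once this is in place, the remaining steps are routine.
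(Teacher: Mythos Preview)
Your proposal is correct and follows essentially the same route as the paper: establish a.s.\ finiteness of $W(f)$ via $\sum_i \brak{f,e_i}^2 = \|f\|^2 < \infty$, argue joint Gaussianity from the common underlying i.i.d.\ normals, compute the covariance via Parseval/polarisation, and deduce basis independence from the fact that the resulting means and covariances reference only inner products. Your treatment is slightly more explicit on the technicalities (the $L^2(\Omega)$-convergence of partial sums and the ``every linear combination is Gaussian'' characterisation of joint normality), but the underlying argument is the same.
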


It follows from (a) that $W(f)(\omega) \in \R$ almost surely (a.s.), thus we should not really worry about when the series is non-convergent.

\begin{proof}

	As $\sum_{i = 1}^\infty \V[\brak{ f, e_i} X_i ] = \sum_{i = 1}^\infty \brak{ f, e_i}^2  = ||f||^2 < \infty$, it follows that $W(f)$ is a well-defined real number almost surely for any $f \in \Hi$. 
	Now observe that $W(f_1), \dots , W(f_n)$ are all convergent series defined by linear transformations using the same underlying \iid normal variables, thus $(W(f_1), \dots W(f_n))$ is jointly Gaussian. 
	
	It is easily seen that $\E[W(f)] = 0$, and from the $X_i$s being i.i.d. and standard Hilbert space theory we get
	\begin{equation}
	\cov(W(f), W(g)) = \sum_{i = 1}^\infty \brak{e_i, f} \brak{e_i, g} = \brak{f, g}, \quad f,g \in \Hi
	\end{equation}
	proving the first part. 
	As none of the variance or mean expressions involve $(e_i)_{i=1}^\infty$, we see that the distribution of $W$ is independent of the chosen basis for $\Hi$.
\end{proof}

\begin{remark} 
	Gaussianity is crucial for the second statement of Proposition \ref{prop-gauss}. It is well-known that a vector $(X_1 \dotcomma X_N)$ of \iid random variables is invariant to rotations (here: change of basis) iff it is jointly Gaussian. 
	The construction in Definition \ref{gauss-defi} would work for other distributions than the Gaussian, but the invariance property is then lost.  
\end{remark}	

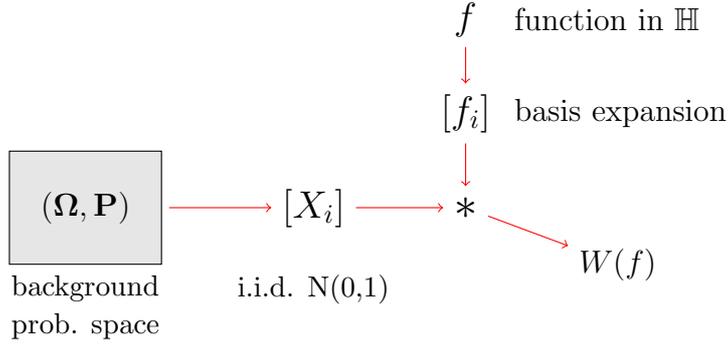
\begin{figure}
	\[
	\begin{tikzpicture}
	\draw [fill=gray!20, draw=black] (0,0) rectangle (2,1.5)
	(1,0.75) node {($\mathbf{\Omega, P}$)}
	(1,0) node[below, align = center] 
	{\small background \\ \small prob. space}
	(4,0.75) node (x) {\large $\left[X_i\right]$}
	(4,0) node[below] {\small i.i.d. N(0,1)}
	(6, 0.75) node (a) {\Large $\ast$}
	(6, 3.25) node (f) {\large $f$}
	(6, 2) node (i) {\large $\left[f_i\right]$}
	(6.5, 3.25) node[right] {function in $\Hi$}
	(6.5, 2) node[right] {basis expansion}
	(8,0) node (w) {$W(f)$}
	;
	\draw[->, red ] (2.1,0.75) -- (x);
	\draw[->, red ] (x) -- (a);
	\draw[->, red ] (f) -- (i);
	\draw[->, red ] (i) -- (a);
	\draw[->, red ] (a) -- (w);
	
	\end{tikzpicture}
	\]
	
	\caption{Application of white Gaussian noise on $f \in \Hi$}
\end{figure}

The second important proposition is linearity:
\begin{proposition}[Linearity]
	$W$ is linear. That is, for $f, g \in \Hi$, and $\lambda \in \R$, then 
	\begin{equation}
	W(\lambda f + g) = \lambda W(f) + W(g)	\quad \text{almost surely}
	\end{equation}
\end{proposition}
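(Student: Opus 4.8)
The plan is to prove the identity by showing that both sides are given by the same convergent series, coefficient-by-coefficient in the basis $(e_i)_{i=1}^\infty$. First I would fix $f, g \in \Hi$ and $\lambda \in \R$, and write out the three relevant partial sums: the $N$-th partial sum of $W(\lambda f + g)$ is $\sum_{i=1}^N \brak{\lambda f + g, e_i} X_i$, while the $N$-th partial sums of $\lambda W(f)$ and $W(g)$ are $\lambda \sum_{i=1}^N \brak{f, e_i} X_i$ and $\sum_{i=1}^N \brak{g, e_i} X_i$ respectively. By bilinearity of the inner product, $\brak{\lambda f + g, e_i} = \lambda \brak{f, e_i} + \brak{g, e_i}$, so the $N$-th partial sum on the left equals $\lambda$ times the $N$-th partial sum of $W(f)$ plus the $N$-th partial sum of $W(g)$, for \emph{every} $N$ and every $\omega$.

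The key point is then a convergence argument: by Proposition \ref{prop-gauss}(a), or rather by the variance computation $\sum_{i=1}^\infty \brak{h, e_i}^2 = \|h\|^2 < \infty$ in its proof, the series defining $W(f)$, $W(g)$, and $W(\lambda f + g)$ each converge almost surely. Let $A_f, A_g, A_{\lambda f + g}$ denote the almost-sure convergence events for the three series; their intersection $A = A_f \cap A_g \cap A_{\lambda f + g}$ still has probability one. On $A$, all three partial-sum sequences converge, so I can pass to the limit $N \to \infty$ in the identity between partial sums and conclude $W(\lambda f + g)(\omega) = \lambda W(f)(\omega) + W(g)(\omega)$ for every $\omega \in A$. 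Since $P(A) = 1$, this establishes the claim almost surely.

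The main obstacle — and it is a mild one — is bookkeeping about the exceptional null sets rather than any real analytic difficulty. The subtlety is that linearity holds only almost surely, not identically: on the complement of $A$ one of the series may diverge and the convention $W(\cdot)(\omega) = \infty$ kicks in, so the naive manipulation $\infty = \lambda \cdot \infty + \infty$ is meaningless. This is precisely why the statement is qualified by ``almost surely,'' and the proof must isolate the single good event $A$ of full probability before taking limits. Once the three convergence events are intersected, term-by-term linearity of the partial sums transfers cleanly to the limits by the ordinary linearity of limits of real sequences, and no further estimates are required.
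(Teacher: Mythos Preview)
Your proof is correct and rests on the same key observation as the paper's: linearity of the inner product gives $\brak{\lambda f + g, e_i} = \lambda \brak{f, e_i} + \brak{g, e_i}$. The paper's proof is literally that one line; you have simply spelled out the almost-sure convergence bookkeeping (intersecting the three full-probability events before passing to the limit) that the paper leaves implicit.
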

\begin{proof}
	Follows from linearity of the inner product, ie. $\brak{\lambda f + g, e_i} = \lambda \brak{f, e_i} +  \brak{g, e_i}$.
\end{proof}

So we should ask ourselves: what kind of objects are instances of Gaussian white noise $W$? They seem to resemble linear functionals; the domain is $\Hi$;  $W(f)$ is a real number almost surely; and we have linearity.

However instances of $W$ are {not} proper linear functionals, as shown in the following proposition: 

\begin{proposition} \label{gw-non-lin}
	It almost surely holds that $W$ is an improper unbounded linear functional, in the sense that for almost-all $\omega \in \Omega$, 
	
	(a) there exists a sequence of unit vectors $f_1, f_2, \dots \in \Hi$ such that $W(f_k)$ is finite for all $k$, but $\lim_{k \pil \infty} W(f_k)(\omega) = \infty$.
	
	(b) there exists $g \in \Hi$ such that $W(g)(\omega) = \infty$. 
\end{proposition}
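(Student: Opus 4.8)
The plan is to rest everything on a single almost-sure fact: since the $X_i$ are \iid $N(0,1)$ with $\E[X_i^2]=1$, the strong law of large numbers gives $\tfrac1n\sum_{i=1}^n X_i^2 \pil 1$ almost surely, whence $\sum_{i=1}^n X_i^2 \pil \infty$ almost surely. I would fix $\omega$ in this probability-one event for the rest of the argument; this is legitimate because the statement only asserts the \emph{existence} of the sequence (resp.\ of $g$) for each such $\omega$, so both constructions may depend on $\omega$. Note that for a fixed deterministic $g \in \Hi$ the series $W(g)$ converges almost surely by Proposition \ref{prop-gauss}, so the impropriety in (b) can only be produced by choosing $g$ \emph{after} observing the noise.

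For part (a) I would align the test functions with the realised noise. Put $T_k := \sum_{i=1}^k X_i^2$, which is strictly positive almost surely, and define the unit vector $f_k := T_k^{-1/2}\sum_{i=1}^k X_i e_i$. Since $\brak{f_k, e_i} = T_k^{-1/2} X_i$ for $i \le k$ and $0$ otherwise, one gets $W(f_k) = T_k^{-1/2}\sum_{i=1}^k X_i^2 = \sqrt{T_k}$, which is finite for each $k$ but tends to $\infty$ by the key fact. This is exactly the required unbounding sequence.

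Part (b) is the main obstacle, and I would handle it with a block construction that "chases" the randomness. Partition $\N$ into consecutive finite blocks $B_1, B_2, \dots$ chosen greedily so that $S_k := \sum_{i \in B_k} X_i^2 \ge 1$ for every $k$; this is possible and the blocks are finite precisely because the total $\sum_i X_i^2$ diverges. Setting $a_i := X_i/(k S_k)$ for $i \in B_k$ and $g := \sum_i a_i e_i$, the per-block contributions decouple: $\sum_{i\in B_k} a_i^2 = 1/(k^2 S_k) \le 1/k^2$ while $\sum_{i\in B_k} a_i X_i = 1/k$. Hence $\|g\|^2 \le \sum_k k^{-2} < \infty$, so $g \in \Hi$, whereas the partial sums of $W(g)$ are non-decreasing (each term $a_i X_i = X_i^2/(kS_k) \ge 0$) and dominate $\sum_{j \le k} 1/j \pil \infty$. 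The defining series therefore diverges, so $W(g)(\omega) = \infty$ by the convention of Definition \ref{gauss-defi}.

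The conceptual content is entirely in this last decoupling: square-summability of $g$ scales like the convergent series $\sum k^{-2}$ while the functional value scales like the divergent harmonic series $\sum k^{-1}$, the two being separated by the block weight $c_k = 1/(kS_k)$. The remaining points are routine bookkeeping -- that $T_k > 0$ almost surely so $f_k$ is well defined, and that the greedy blocks are finite and exhaust $\N$ -- all of which follow immediately from $\sum_i X_i^2 = \infty$.
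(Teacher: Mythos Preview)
Your proof is correct, but it proceeds quite differently from the paper's. The paper exploits the \emph{unbounded support} of the Gaussian: almost surely there is a subsequence $(i_k)$ with $X_{i_k}>k$, so one can simply take $f_k=e_{i_k}$ for (a) and $g=\sum_k k^{-2}e_{i_k}$ for (b), getting $W(g)=\sum_k k^{-2}X_{i_k}>\sum_k k^{-1}=\infty$ in one line. Your argument instead rests only on $\sum_i X_i^2=\infty$ (via the SLLN), then \emph{aligns} the test vectors with the realised noise: the normalised vector $f_k=T_k^{-1/2}\sum_{i\le k}X_i e_i$ for (a), and the block-weighted $g$ for (b). The paper's route is shorter and needs no block bookkeeping; yours buys generality, since it uses nothing about the $X_i$ beyond non-degeneracy, and would therefore carry over verbatim to the non-Gaussian variants the paper alludes to in its closing discussion, where the single-coordinate argument $X_{i_k}>k$ can fail for bounded distributions.
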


\begin{proof}
	First note that $ (X_i)$ is an {infinite} sequence of \iid standard normal variables.
	With probability one, for every $x > 0$ we can find an infinite number of $i$'s s.t. $X_i > x$. Thus, there exists a sequence $(i_k)_{k=1}^\infty$ for which $X_{i_k} > k$ for all $k$. Now define $f_k = e_{i_k}$ for all $k$. Then $W(f_k) = X_{i_k} > k$ for all $k$, proving the first part. 
	
	Assume we are given an $\omega \in \Omega$ s.t. the first part holds. Now define 
	\begin{equation}
	g = \sum_{k=1}^\infty k^{-2} e_{i_k} \in \Hi
	\end{equation}
	Then $W(g)(\omega) = \sum_{k=1}^\infty k^{-2} X_{i_k}(\omega) > \sum_{k=1}^\infty k^{-1} = \infty$, proving the second part.  
\end{proof}
The fact that we are allowed to conclude (b) despite that $W(g) \in \R$  a.s. for any $g$, is related to the fact that the number of subsequences of $e_{i_k}$ is uncountable. 
This can also be seen as an instance of a more general aspect of measure theory, namely that:
\begin{align*}
[(y \in A_y) \quad \text{almost surely}] \quad \text{for all } y \in \mathcal{Y} \\
[(y \in A_y) \quad \text{for all }y  \in \mathcal{Y}] \quad \text{almost surely}
\end{align*}
are two very different statements when $\mathcal{Y}$ is an uncountable set.

\subsection{Integration of white noise} \label{white-set-sec}
The concept of Gaussian noise is arguably most interesting when  $\Hi$ is a function space of square-integrable functions, ie. $\Hi = L^2[U]$ for some $U \subseteq \R^m$.
We will use $\mu$ to denote the Lebesgue measure and $\mathbb{B}_U$ to denote the set of Borel sets on $U$.

We shall identify sets with their indicator functions in the sense that $W(A) := W(1_A)$ for $A \in \mathbb{B}_U$. 

\begin{proposition}
	For $A,B \in \mathbb{B}_U$ we have
	\begin{align}
	W(A) \sim N(0, \mu(A)) \\
	\mu(A \cap B) = 0 \eb W(A \cup B) = W(A) + W(B) a.s. \eb  W(A) \uaf W(B)
	\end{align}
	and the distribution of $W$ is independent of the choice of basis for $L^2(U)$.
\end{proposition}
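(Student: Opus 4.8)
The plan is to reduce every clause of the statement to the single scalar condition $\mu(A \cap B) = 0$, relying on three tools already in hand: Proposition \ref{prop-gauss} (joint Gaussianity together with the covariance formula), the linearity of $W$, and the elementary set identity $1_{A \cup B} = 1_A + 1_B - 1_{A \cap B}$.

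First I would establish the marginal law. Since $W(A) = W(1_A)$ and $1_A \in L^2[U]$ with $||1_A||^2 = \int_U 1_A \de \mu = \mu(A)$, applying Proposition \ref{prop-gauss}(a) to the single function $1_A$ gives $W(A) \sim N(0, \mu(A))$ at once. This tacitly uses $\mu(A) < \infty$, so that $1_A$ is genuinely an element of $L^2[U]$; when $U$ has infinite measure one restricts to sets of finite measure.

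Next I would settle the equivalence chain by showing that each of the three conditions is equivalent to $\mu(A \cap B) = 0$, rather than proving the implications pairwise. For additivity, the set identity and linearity give $W(A \cup B) = W(A) + W(B) - W(A \cap B)$ a.s., so the displayed additivity holds a.s.\ iff $W(A \cap B) = 0$ a.s.; by the marginal law just proved $W(A \cap B) \sim N(0, \mu(A \cap B))$, and a centred Gaussian vanishes almost surely precisely when its variance is zero, whence additivity $\eb \mu(A \cap B) = 0$. For independence, Proposition \ref{prop-gauss}(a) makes $(W(A), W(B))$ jointly Gaussian with $\cov(W(A), W(B)) = \brak{1_A, 1_B} = \int_U 1_{A \cap B} \de \mu = \mu(A \cap B)$, and for jointly Gaussian variables independence is equivalent to vanishing covariance, so $W(A) \uaf W(B) \eb \mu(A \cap B) = 0$. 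Routing both equivalences through the common condition $\mu(A \cap B) = 0$ closes the stated ring. Basis independence is then immediate from Proposition \ref{prop-gauss}(b), since every quantity above is a value of $W$ on an indicator function, whose joint law was already shown to be basis-free.

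The only real obstacle is logical bookkeeping: the cleanest route is to make $\mu(A \cap B) = 0$ the hub through which all three statements pass, which avoids proving a genuine cycle of implications. The one substantive hypothesis to flag is the integrability caveat $\mu(A) < \infty$ needed to place $1_A$ in $L^2[U]$.
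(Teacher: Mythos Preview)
Your proposal is correct and follows essentially the same route as the paper: both reduce everything to Proposition~\ref{prop-gauss}, compute $\cov(W(A),W(B))=\brak{1_A,1_B}=\mu(A\cap B)$, and use that joint Gaussianity makes independence equivalent to zero covariance. Your use of the exact identity $1_{A\cup B}=1_A+1_B-1_{A\cap B}$ in place of the paper's almost-everywhere equality, and your explicit handling of the marginal law and the finiteness caveat $\mu(A)<\infty$, are minor organisational refinements rather than a different argument.
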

\begin{proof}
	$(A \cap B$ is null set$) \Rightarrow 1_{A \cup B} = 1_A + 1_B$ a.s. $\Rightarrow W(A \cup B) = W(A) + W(B)$. Regarding the opposite implication, it is a small exercise in writing down covariances and deducing that $\mu(A \cap B) = 0$. Finally, as everything is jointly Gaussian, $ W(A) \uaf W(B) \eb \cov(W(A), W(B)) = 0 \eb \brak{1_A, 1_B} = 0 \eb \mu(A \cap B) = 0$. 
\end{proof}
Thus $A$ and $B$ are disjoint iff $W(A)$ and $W(B)$ are independent -- this is intuitive from the 'white noise' point of view and agrees with the definition of \cite[Chapter 1.4.3]{taylorAdler}.

If $A$ is an interval on the real line, this defintion aligns with stochastic integration of pure white noise, ie.: 
$$
\int_A \de U_t = W(A)
$$
where $U$ is a Wiener process. 

One may think of a realization of $W$ as an element of $L^2[U]$ consisting of "pure noise". However, strictly speaking this does not make sense: by Proposition \ref{gw-non-lin} and the non-applicability of Riesz' theorem, we cannot identify $W$ with a function in $L^2[U]$. 

When operators have been introduced, we will have the Brownian motion as an example in Section \ref{inference-sec}.

\section{Gaussian processes defined using operators on Hilbert space} \label{gaus-ops}

The idea of 'white  noise' is a somewhat abstract mathematical concept when viewed on its own.
Here we will introduce operators which allow us to define a much broader class of random variables, where the identity operator, $\I$, corresponds to the Gaussian white noise of the previous section. 

Many authors have used operators in the context of Gaussian processes, where the covariance function is identified with an integral operator. By the Karhunen–Loève theorem, such operators have to be compact for the resulting function to be continuous. A notable paper that uses more general operators in relation to Gaussian processes is \cite{markussen2013}. 

\paragraph{Notation} 
Let $B(\Hi, \M{K})$ denote the class of bounded linear operators $\Hi \pil \M{K}$ for separable Hilbert spaces $\Hi, \M{K}$. 
$O^* \in B(\M{K}, \Hi)$ will denote the adjoint of $O$, and $B(\Hi)$ will be shorthand for  $B(\Hi, \Hi)$.

\begin{defi}[Gaussian process defined by the operator $O$]
	Assume $\mathbb{H}, \M{K}$ to be separable Hilbert spaces, where $(e_i)_{i = 1}^\infty$ is an orthonormal basis for $\Hi$.
	
	Let $O \in B(\Hi, \M{K})$. 	The random variable $\tilde{O}: \M{K} \times \Omega \pil \R$ is defined by:
	\begin{equation}
	\tilde{O}(f)(\omega) := \sum_{i = 1}^\infty X_i(\omega) \brak{ f, O e_i}, \quad f \in  \mathbb{K}, \omega \in \Omega
	\end{equation}
	which for any $f$ exists a.s. We define variance of $\tilde{O}$ to be the operator $OO^* \in B(\M{K})$.  
	
	The variance of $\tilde{O}$ is defined to be $OO^* \in B(\M{K})$. 
\end{defi}

\begin{proposition}
	The distribution of $\tilde{O}$ is independent of the choice of basis for $\Hi$.
	$(\tilde{O}(f_1), \dots, \tilde{O}(f_n))$ is multivariate normally distributed with mean zero and  $\cov(\tilde{O}(f_i), \tilde{O}(f_j)) = \brak{O^*f, O^*g}$.
\end{proposition}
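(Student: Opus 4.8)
The plan is to mirror the proof of Proposition \ref{prop-gauss}, the only genuinely new ingredient being the adjoint identity $\brak{f, Oe_i} = \brak{O^*f, e_i}$, which rewrites the defining series in exactly the white-noise form but with $f$ replaced by $O^*f \in \Hi$. Everything else then reduces to the earlier computations.

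First I would establish a.s.\ convergence of each series $\tilde{O}(f) = \sum_i X_i \brak{f, Oe_i}$. The $i$-th summand is centred with variance $\brak{f, Oe_i}^2 = \brak{O^*f, e_i}^2$, and since $O^*$ is bounded we have $O^*f \in \Hi$, so by Parseval's identity
\[
\sum_{i=1}^\infty \V[X_i \brak{f, Oe_i}] = \sum_{i=1}^\infty \brak{O^*f, e_i}^2 = \|O^*f\|^2 < \infty .
\]
A series of independent mean-zero terms with summable variances converges a.s., so $\tilde{O}(f) \in \R$ a.s., just as in Proposition \ref{prop-gauss}.

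Next I would treat joint Gaussianity and the moments. For fixed $f_1, \dots, f_n$, every partial sum $\bigl(\sum_{i \le N} X_i \brak{f_k, Oe_i}\bigr)_{k=1}^n$ is a linear image of the i.i.d.\ Gaussian vector $(X_1, \dots, X_N)$ and hence jointly Gaussian; letting $N \pil \infty$, the a.s.\ (hence in-distribution) limit of Gaussian vectors is Gaussian, so $(\tilde{O}(f_1), \dots, \tilde{O}(f_n))$ is jointly Gaussian. The mean is zero because $\E X_i = 0$. For the covariance I would use independence and unit variance of the $X_i$ together with the adjoint identity:
\[
\cov(\tilde{O}(f), \tilde{O}(g)) = \sum_{i=1}^\infty \brak{f, Oe_i}\brak{g, Oe_i} = \sum_{i=1}^\infty \brak{O^*f, e_i}\brak{O^*g, e_i} = \brak{O^*f, O^*g},
\]
the last equality again being Parseval. (The statement's $\brak{O^*f, O^*g}$ is understood with $f = f_i$, $g = f_j$.)

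Finally, basis independence. Since the zero mean and the covariance $\brak{O^*f, O^*g}$ make no reference to the basis $(e_i)$ — equivalently, the covariance operator is $OO^*$ — all finite-dimensional distributions are basis-free, and a mean-zero Gaussian process is determined by its covariance, so the law of $\tilde{O}$ is independent of the chosen basis. I expect the main obstacle to be precisely this last point: because the \emph{definition} of $\tilde{O}(f)$ invokes $(e_i)$ explicitly, one must argue that although the random variable itself generally changes under a change of basis, its distribution does not — and this is exactly what the basis-free covariance formula delivers once joint Gaussianity of every finite collection has been secured.
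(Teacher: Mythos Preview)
Your proposal is correct and follows essentially the same approach as the paper: the paper's own proof merely says it is analogous to Proposition~\ref{prop-gauss} and then displays the single chain $\cov(\tilde{O}(f), \tilde{O}(g)) = \sum_i \brak{f, Oe_i}\brak{g, Oe_i} = \sum_i \brak{O^*f, e_i}\brak{O^*g, e_i} = \brak{O^*f, O^*g}$, which is exactly your covariance computation via the adjoint identity and Parseval. You have simply filled in the details (a.s.\ convergence, joint Gaussianity of partial sums, basis independence from the basis-free covariance) that the paper leaves implicit.
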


\begin{proof}
	The proof is similar to the proof of Proposition \ref{prop-gauss}. 
	The only thing of interest to show is
	\begin{equation}
	\cov(\tilde{O}(f), \tilde{O}(g)) = \sum_{i = 1}^\infty  \brak{f,  O e_i} \brak{g,  O e_i}  = 
	\sum_{i = 1}^\infty  \brak{O^* f,  e_i} \brak{O^* g,  e_i} = \brak{O^*f, O^* g}
	\end{equation}
\end{proof}
Linearity  of operators is  preserved by this construction: if $R = \lambda O + K$ for $\lambda \in \R$ and $O,K \in B(\Hi, \M{K})$, then:
\[
\tilde{R}(f) = \lambda \cdot \tilde{O}(f) + \tilde{K}(f).
\]
If given an operator $O$  in $B(\Hi, \M{K})$, we shall view $OO^*$ as the variance of $\tilde{O}$. The latter is a positive semidefinite operator which uniquely characterizes the distribution of $\tilde{O}$:  $\tilde{O} \stackrel{D}{=} \tilde{R} \eb OO^* = RR^*$. The standard formulae for covariances follow from linearity of operators.
\begin{figure}
	\[
	\begin{tikzpicture}
	\draw [fill=gray!20, draw=black] (0,0) rectangle (2,1.5)
	(1,0.75) node {($\mathbf{\Omega, P}$)}
	(1,0) node[below, align = center] 
	{\small background \\ \small prob. space}
	(4,0.75) node (x) {\large $\left[X_i\right]$}
	(4,0) node[below] {\small i.i.d. N(0,1)}
	(6, 0.75) node (a) {\Large $\ast$}
	(6, 3.25) node (f) {\large $f$}
	(6, 2) node (i) {\large $\left[f_i\right]$}
	(4.75, 3.25) node (o) {${O}$}
	(4.6, 3.1) node[left] {Operator in $\mb(\Hi)$}
	(6.5, 3.25) node[right] {function in $\Hi$}
	(6.5, 2) node[right] {basis expansion of $O^* f$}
	(8,0) node (w) {$\tilde{O}(f)$}
	;
	\draw[->, red ] (2.1,0.75) -- (x);
	\draw[->, red ] (x) -- (a);
	\draw[->, red ] (f) -- (i);
	\draw[->, red ] (i) -- (a);
	\draw[->, red ] (a) -- (w);
	\draw[->, red ] (o) -- (i);
	
	\end{tikzpicture}
	\]
	
	\caption{Application of a Gaussian process $\tilde{O}$ on $f \in \Hi$}
\end{figure}
\medskip

Which kind of objects are instances of $\tilde{O}$?
Like white Gaussian noise, $\tilde{O}$ is an (improper) linear functional, and it is trivially seen that if $O = \I$, the identity operator, then $\tilde{O}$ is Gaussian white noise. 
However, there is a plethora of interesting operators, so we have many  opportunities for interesting Gaussian processes. 
The following proposition contrasts with Proposition \ref{gw-non-lin} and gives a criterion for instances of $\tilde{O}$ to be proper linear functionals. By Riesz Representation theorem, these can further be identified with elements in $\Hi$. 

\begin{thm} \label{hilbert-schmidt}
	Let $O \in B(\Hi)$ be a Hilbert-Schmidt operator. Then $\tilde{O}$ is a bounded linear functional for almost all $\omega \in \Omega$.
\end{thm}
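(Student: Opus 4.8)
The plan is to exhibit, for almost every $\omega$, an explicit element $Y(\omega) \in \Hi$ that represents the functional $\tilde{O}(\cdot)(\omega)$, so that boundedness follows immediately from Cauchy--Schwarz. Since linearity of $\tilde{O}$ has already been established, the entire content of the theorem is the almost-sure existence of such a representer. I would set $Y(\omega) := \sum_{i=1}^\infty X_i(\omega)\, Oe_i$ and argue that this series converges in $\Hi$-norm almost surely. Once that is done, continuity of the inner product yields $\tilde{O}(f)(\omega) = \brak{f, Y(\omega)}$ for every $f \in \Hi$ simultaneously on the convergence event, whence $f \mapsto \tilde{O}(f)(\omega)$ is bounded with functional norm $\|Y(\omega)\|$.

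First I would record that the Hilbert-Schmidt hypothesis is exactly $\sum_{i=1}^\infty \|Oe_i\|^2 =: \|O\|_{HS}^2 < \infty$. Writing $S_n := \sum_{i=1}^n X_i\, Oe_i$ for the partial sums, independence and $\E X_i = 0$ annihilate the cross terms, giving $\E\|S_m - S_n\|^2 = \sum_{i=n+1}^m \|Oe_i\|^2$ for $m > n$. By the Hilbert-Schmidt bound these tail sums vanish, so $(S_n)$ is Cauchy in $L^2(\Omega;\Hi)$ and hence converges there to some limit $Y$. This already produces an $\Hi$-valued $L^2$ limit; the remaining work is to upgrade it to almost sure convergence of the partial sums themselves.

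For that upgrade I would observe that $(S_n)$ is a martingale with respect to the filtration $\mathcal{F}_n = \sigma(X_1,\dots,X_n)$, since $\E[X_{n+1} Oe_{n+1} \mid \mathcal{F}_n] = 0$, and that it is bounded in $L^2$ because $\E\|S_n\|^2 = \sum_{i=1}^n \|Oe_i\|^2 \le \|O\|_{HS}^2$. The vector-valued martingale convergence theorem for $L^2$-bounded martingales in a separable Hilbert space then yields a full-measure event on which $S_n(\omega) \to Y(\omega)$ in norm. (Alternatively, since each $X_i Oe_i$ is symmetric, the It\^o--Nisio theorem promotes the convergence in probability supplied by the $L^2$ bound to almost sure convergence.) I expect this to be the main obstacle: obtaining genuine norm convergence of the Hilbert-space-valued sum rather than mere $L^2$ or coordinatewise convergence. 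This is precisely the delicate point flagged by the authors after Proposition \ref{gw-non-lin} --- pointwise-in-$f$ almost-sure statements need not hold simultaneously for all $f$ --- and it is exactly the norm convergence of $S_n$ that forces the required uniform control.

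Finally, on the full-measure event where $S_n(\omega) \to Y(\omega)$, for any fixed $f \in \Hi$ I would write $\sum_{i=1}^n X_i(\omega)\brak{f, Oe_i} = \brak{f, S_n(\omega)} \to \brak{f, Y(\omega)}$ by continuity of the inner product, so the defining series for $\tilde{O}(f)(\omega)$ converges to $\brak{f, Y(\omega)}$ for all $f$ at once. Hence $\tilde{O}(\cdot)(\omega)$ is the bounded linear functional represented by $Y(\omega)$, which completes the proof; Riesz' theorem then identifies it with $Y(\omega) \in \Hi$, as remarked before the statement.
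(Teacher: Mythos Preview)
Your proof is correct and takes the same route as the paper: construct the representer $Y(\omega)=\sum_i X_i(\omega)\,Oe_i$ and obtain boundedness from $\tilde O(f)(\omega)=\langle f,Y(\omega)\rangle$. Where the paper argues tersely via the a.s.\ finiteness of $\sum_i X_i^2\|Oe_i\|^2$ (which has expectation $\|O\|_{HS}^2$), your $L^2$-martingale / It\^o--Nisio step supplies the almost-sure norm convergence of the partial sums more explicitly.
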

\begin{proof}
	Let $(O_{ij})_{i,j = 1}^\infty$ be the matrix representation for $O$ corresponding to $ (e_i)_{i=1}^\infty $. By the Hilbert-Schmidt assumption, $\sum_{i,j} O_{i,j}^2 < \infty$.
	Since  $X_i$ is a sequence of \iid N(0,1) variables, $\sum_{i = 1}^\infty \sum_{j = 1}^\infty O_{ij}^2  X_i^2$ is finite a.s.
	Therefore, the operator defined by $e_i \mapsto \sum_{i = 1}^\infty X_i O e_i$ is Hilbert-Schmidt almost surely, and the associated functional
	\begin{equation}
	\tilde{O}(f) := \sum_{i = 1}^\infty X_i(\omega) \brak{f,  O e_i} = \brak{f,  \sum_{i = 1}^\infty X_i(\omega) O e_i}
	\end{equation}	
	is  bounded for almost all $\omega \in \Omega$.
\end{proof}

\begin{lemma} 
	Note that compactness of $O$ is not a sufficient condition for $\tilde{O}$ to be a linear functional. 
	
	This can be seen choosing $O \in B(\Hi)$ s.t. $Oe_i = \phi_i e_i$, where $\phi_i \pil 0$ for $i \pil \infty$, and $\phi_i \geq  1 / \sqrt{\log \log i}$ for $i \geq 3$. 
	Then $O$ is a compact operator, but $\tilde{O}$ is not bounded:
	
	For $u > 1$, it holds that $P(X_i > u) > e^{-u^2 / 2} / u$. Therefore, for $x > 1$:
	\begin{equation}
	\sum_i P(X_i > \frac{x}{\phi_i}) > \sum_i \frac{\phi_i e^{-x^2 /(2\phi_i^2))}}{x} \geq \sum_i \frac{ e^{-\frac{1}{2}x^2 \log \log i}}{x \sqrt{\log \log i}} = \sum_i \frac{ 1 }{x \sqrt{\log \log i} (\log i)^{x^2 /2}} = \infty 
	\end{equation}
	By the Borel-Cantelli lemma it happens a.s. that $X_i > \frac{x}{\phi_i}$ infinitely often. Thus a.s, there exists a sequence $(i_k)_{k = 1}^\infty$ s.t. $X_{i_k} > \frac{k}{\phi_{i_k}}$ for all $k$. Now define $f_k = e_{i_k}$ for all $k$. Then $||f_k|| = 1$ and $\tilde{O}f_k = X_{i_k} \phi_{i_k} > k$. This proves that $\tilde{O}$ is unbounded a.s.
\end{lemma}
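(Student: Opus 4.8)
The statement is a counterexample, so the plan is to verify the two assertions separately: first that the prescribed $O$ is compact, and then that $\tilde O$ is almost surely unbounded. Compactness is the easy half. Since $Oe_i = \phi_i e_i$, the operator $O$ is diagonal in the basis $(e_i)_{i=1}^\infty$ with eigenvalues $\phi_i \to 0$. I would therefore exhibit it as the operator-norm limit of its finite-rank truncations $O_N$ (defined by $O_N e_i = \phi_i e_i$ for $i \le N$ and $O_N e_i = 0$ otherwise): one has $\|O - O_N\| = \sup_{i>N}|\phi_i| \to 0$, and a norm-limit of finite-rank operators is compact. The whole interest of the lemma lies in the second half.

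For unboundedness, the guiding observation is that the diagonal action gives $\tilde O(e_j) = X_j\phi_j$ (a.s.\ finite for each fixed $j$ by the definition of $\tilde O$). Hence the natural way to defeat boundedness is to produce a \emph{random} increasing sequence of indices $(i_k)$ along which the unit vectors $f_k = e_{i_k}$ satisfy $\tilde O(f_k) = X_{i_k}\phi_{i_k} > k$. This reduces the problem to showing that, almost surely, the event $\{X_i > k/\phi_i\}$ can be realised infinitely often while letting $k \to \infty$.

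The crux is a Borel--Cantelli estimate. Fix a threshold $x > 1$; for all $i$ large enough that $x/\phi_i > 1$ I would apply the Gaussian tail bound $P(X_i > u) > e^{-u^2/2}/u$ with $u = x/\phi_i$, giving $P(X_i > x/\phi_i) > \tfrac{\phi_i}{x}\,e^{-x^2/(2\phi_i^2)}$. Then I feed in the hypothesis $\phi_i \ge 1/\sqrt{\log\log i}$, which gives $1/\phi_i^2 \le \log\log i$ and hence $e^{-x^2/(2\phi_i^2)} \ge (\log i)^{-x^2/2}$, together with $\phi_i \ge 1/\sqrt{\log\log i}$ in the prefactor. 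The resulting lower bound is of order $1/\big(x\sqrt{\log\log i}\,(\log i)^{x^2/2}\big)$, and $\sum_i$ of this \emph{diverges}, because any negative power of $\log i$ decays far too slowly to be summable. Since the $X_i$ are independent, the second Borel--Cantelli lemma yields $P\big(X_i > x/\phi_i \text{ infinitely often}\big) = 1$ for each fixed $x$. I expect this tail-sum estimate to be the main obstacle: the bound on $\phi_i$ is calibrated precisely so that the Gaussian factor $e^{-x^2/(2\phi_i^2)}$ is weakened to a non-summable power of $\log i$, and one must be careful to use the inequalities in the correct direction (in particular $1/\phi_i^2 \le \log\log i$, which reverses under exponentiation).

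Finally I would upgrade from a fixed threshold to a growing one by a diagonal argument. The events $B_m = \{X_i > m/\phi_i \text{ i.o.}\}$ each have probability one, so does $\bigcap_{m=1}^\infty B_m$; on that event I greedily choose $i_1 < i_2 < \cdots$ with $X_{i_k} > k/\phi_{i_k}$ for every $k$ (possible because $B_k$ supplies infinitely many admissible indices beyond $i_{k-1}$). Setting $f_k = e_{i_k}$ then gives $\|f_k\| = 1$ and $\tilde O(f_k) = X_{i_k}\phi_{i_k} > k \to \infty$, so $\tilde O$ is an unbounded functional almost surely, contrasting with Theorem~\ref{hilbert-schmidt} and showing that compactness alone does not suffice.
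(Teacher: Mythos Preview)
Your proposal is correct and follows essentially the same route as the paper: the same diagonal operator, the same Gaussian tail bound $P(X_i>u)>e^{-u^2/2}/u$, the same divergence estimate via $\phi_i\ge 1/\sqrt{\log\log i}$, and the same Borel--Cantelli conclusion. The only differences are cosmetic: you spell out the compactness argument (norm limit of finite-rank truncations) and the diagonal extraction $\bigcap_m B_m$ more explicitly than the paper, which simply asserts these steps.
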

The convergence of $\phi_i$ in the above lemma is very slow, and the result  depends intrinsically on the tail probabilities for the normal distribution.

\subsection{Inverse and precision operators}

If $O \in B(\Hi)$ is invertible, then we can speak of a \emph{precision operator} and the inverse $O^{-1}$ converts $\tilde{O}$ back to white Gaussian  noise. The precision operator is seen to be $O^{*,-1} O^{-1}$.

The precision operator can be used as a norm on $\Hi$, giving a distance between data in $\Hi$ and a quantification of outliers.
In Section \ref{l201} we will use the inverse operator  to define a functional log-likelihood for statistical modelling.

\section{Operators on $L^2$-spaces} \label{l2-sec}
In the following we will consider the special case $\Hi = L^2[U]$, where $U$ is an open subset of $\R^m$. 
By Theorem \ref{hilbert-schmidt}, Hilbert-Schmidt operators are true linear functionals, and thus we can associate a realisation of $\tilde{O}$ with a function $f: U \pil \R$. 
We will however define $\tilde{O}(x)$ as the downward limit of  $\tilde{O}$ applied to balls around $x$ for $x \in U$. As in Section \ref{white-set-sec}, we shall identify sets with their indicator functions, ie. $\tilde{O}(A) := \tilde{O} 1_A$ for $A \in \mathbb{B}_U$.

\begin{defi}[Local continuity]
	An operator $O \in L^2(U)$ and its resulting random variable $\tilde{O}$ is  \emph{locally continuous} in a point $x \in U$ if it holds that $\tilde{O}(B(x, \epsilon)) / \mu(B(x, \epsilon))$ is $L^2$-convergent for $\epsilon \pil 0$, where $B(x, \epsilon)$ is the ball around $x$ with radius $\epsilon$. 
\end{defi}
We define this limit to be the value of $\tilde{O}(x)$. 
It is easily seen that if $\tilde{O}$ is locally continuous in $x_1 \dotcomma x_n$, then $\tilde{O}(x_1) \dotcomma \tilde{O}(x_n)$ are jointly Gaussian, and
$$
\cov(\tilde{O}(x_1), \tilde{O}(x_2)) = \lim_{\epsilon \pil 0} \bigbrak{\frac{O^* 1_{B(x_1, \epsilon)}}{ \mu(B(x_1, \epsilon))} ,  \frac{O^* 1_{B(x_2, \epsilon)}}{ \mu(B(x_2, \epsilon))}}
$$
Furthermore, we define $\tilde{O}$ to be continuous if $\tilde{O}$ is locally continuous for every $x \in U$. If  $\tilde{O}$ is continuous, we can thus associate $\tilde{O}$ with a zero-mean Gaussian process in the 'classical' sense (ie. a process associated with a covariance function $U \times U \pil \R_+$). 
As we discuss in Remark \ref{kont-remark}, this covariance function is simply the kernel of $OO^*$, under regularity conditions.

For a large class of integral operators, it holds that $\tilde{O}$ is continuous.
Contrary, the Gaussian white noise  process $W$ is nowhere locally continuous. This is also true for the sum of a white noise Gaussian process and a continuous Gaussian process. 

\subsection{Integral operators and continuity} \label{int-sec}

\begin{lemma} \label{intop}
	Let $K$ be an integral operator with kernel $K(t,s)$, ie.\ $Kf(t) = \int_U K(t,s) f(s) \de s$. 
	If it for $t_0 \in U$ holds that 
	\begin{equation}
	t \mapsto \mu(B(t_0, \epsilon))^{-1} \int_{B(t_0, \epsilon)} K(s,t) \de s \label{L2-graense}
	\end{equation}
	converges in $L^2$ for $\epsilon \pil 0$, then the associated Gaussian process $\tilde{K}$ is locally continuous in $t_0$.
\end{lemma}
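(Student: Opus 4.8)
The plan is to unwind the definition of local continuity and reduce the claimed mean-square convergence of random variables to the hypothesised $L^2(U)$ convergence of deterministic functions, the bridge being the isometry built into the covariance formula of the preceding proposition. Write $Y_\epsilon := \tilde{K}(B(t_0,\epsilon)) / \mu(B(t_0,\epsilon)) = \tilde{K}(f_\epsilon)$, where $f_\epsilon := 1_{B(t_0,\epsilon)}/\mu(B(t_0,\epsilon)) \in L^2(U)$. Local continuity at $t_0$ means exactly that the family $(Y_\epsilon)$ converges in $L^2(\Omega)$ as $\epsilon \pil 0$. Since $L^2(\Omega)$ is complete, it suffices to verify the Cauchy criterion: for every $\delta > 0$ there is an $\epsilon_0$ with $\E[(Y_{\epsilon_1} - Y_{\epsilon_2})^2] < \delta$ whenever $\epsilon_1, \epsilon_2 < \epsilon_0$.

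First I would compute this $L^2(\Omega)$ distance. Each $Y_\epsilon$ is a centred Gaussian, and by linearity of $\tilde{K}$ we have $Y_{\epsilon_1} - Y_{\epsilon_2} = \tilde{K}(f_{\epsilon_1} - f_{\epsilon_2})$ almost surely, so the preceding proposition gives
\begin{equation}
\E[(Y_{\epsilon_1} - Y_{\epsilon_2})^2] = \var(\tilde{K}(f_{\epsilon_1} - f_{\epsilon_2})) = ||K^*(f_{\epsilon_1} - f_{\epsilon_2})||^2 = ||K^* f_{\epsilon_1} - K^* f_{\epsilon_2}||^2,
\end{equation}
the norm on the right being that of $L^2(U)$. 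The key identification is that $K^*$ is the integral operator with transposed kernel, $K^* g(t) = \int_U K(s,t) g(s) \de s$, which one checks from $\brak{Kf,g} = \brak{f, K^* g}$ and Fubini. Applying this to $g = f_\epsilon$ yields
\begin{equation}
K^* f_\epsilon(t) = \mu(B(t_0,\epsilon))^{-1} \int_{B(t_0,\epsilon)} K(s,t) \de s,
\end{equation}
which is precisely the function appearing in (\ref{L2-graense}).

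Thus the hypothesis says exactly that the net $(K^* f_\epsilon)$ converges in $L^2(U)$ as $\epsilon \pil 0$, hence is Cauchy there; combined with the displayed isometry this makes $(Y_\epsilon)$ Cauchy in $L^2(\Omega)$, and completeness delivers the desired limit, establishing local continuity at $t_0$. I do not expect a serious obstacle, since the argument is essentially the Wiener-type isometry $\var(\tilde{K}(f)) = ||K^* f||^2$ transporting Cauchyness from one Hilbert space to another; the only points demanding care are keeping the two distinct $L^2$ spaces (over $U$ and over $\Omega$) apart, and justifying the transposed-kernel formula for $K^*$, which is where any integrability conditions on the kernel are actually used. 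A minor formal point is that convergence is stated for the continuous parameter $\epsilon \pil 0$ rather than along a sequence, but since $L^2$ convergence is metric the Cauchy criterion applies verbatim to this net.
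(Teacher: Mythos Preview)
Your proposal is correct and follows essentially the same route as the paper: identify $K^* f_\epsilon$ with the function in \eqref{L2-graense} via the transposed-kernel formula, and use the isometry $\var(\tilde{K}(f)) = ||K^* f||^2$ to transport $L^2(U)$-convergence of $K^* f_\epsilon$ to $L^2(\Omega)$-convergence of $\tilde{K}(f_\epsilon)$. The paper compresses the isometry step into the single phrase ``by definition, $\tilde{K}$ is locally continuous if $K^* 1_{B(t_0,\epsilon_n)}/\mu(B(t_0,\epsilon_n))$ is convergent,'' whereas you spell out the Cauchy argument explicitly; this is a difference in exposition, not in substance.
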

\begin{proof}
	Let $(\epsilon_n)_{n=1}^\infty$ be a positive sequence converging to zero.
	By definition, 	$\tilde{K}$ is locally continuous in $t_0$ if $\frac{K^* 1_{B(t_0,\epsilon_n)}}{ \mu(B(t_0, \epsilon_n))}$ is convergent. 
	Since $ \int_A K(s,t) \de s = (K^*1_A)(t)$ for any $A \in \mathbb{B}_U$, this is true by assumption. 
\end{proof}

\begin{proposition} \label{kont-korr}
	Assume $K$ is uniformly continuous on $U$, and assume $\mu(U)$ is finite. Then $\tilde{K}$ is well-behaved for all $t_0 \in U$, and $\cov(\tilde{K}(u_1) , \tilde{K} (u_2)) = \int_U K(u_1, t) K(u_2, t) \de t$
\end{proposition}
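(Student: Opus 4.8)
The plan is to derive both assertions from Lemma \ref{intop} together with the covariance formula recorded just after the definition of local continuity. Fix $t_0 \in U$ and interpret the hypothesis as uniform continuity of the kernel $(s,t) \mapsto K(s,t)$ on $U \times U$. Since $(K^* 1_A)(t) = \int_A K(s,t) \de s$, the function appearing in Lemma \ref{intop} is
\[
g_\epsilon(t) := \mu(B(t_0,\epsilon))^{-1} \int_{B(t_0,\epsilon)} K(s,t) \de s = \mu(B(t_0,\epsilon))^{-1} \big(K^* 1_{B(t_0,\epsilon)}\big)(t),
\]
and the natural candidate for its limit is $g(t) := K(t_0,t)$, the kernel frozen in its first argument. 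Because $U$ is open, $B(t_0,\epsilon) \subseteq U$ for all sufficiently small $\epsilon$, so no boundary effects arise and the normalisation $\mu(B(t_0,\epsilon))$ is unambiguous.

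First I would show $g_\epsilon \pil g$ in $L^2[U]$; this is the crux. Given $\eta > 0$, uniform continuity supplies $\delta > 0$ with $|K(s,t) - K(t_0,t)| < \eta$ whenever $|s - t_0| < \delta$, \emph{uniformly in} $t$. Averaging over the ball then gives, for $\epsilon < \delta$,
\[
|g_\epsilon(t) - g(t)| = \left| \mu(B(t_0,\epsilon))^{-1} \int_{B(t_0,\epsilon)} \big[K(s,t) - K(t_0,t)\big] \de s \right| \leq \eta
\]
for every $t \in U$. Since $\mu(U) < \infty$, this uniform bound integrates to $\| g_\epsilon - g \|^2 \leq \eta^2 \, \mu(U) \pil 0$, so $g_\epsilon \pil g$ in $L^2$ (and $g \in L^2[U]$, being an $L^2$-limit of the $K^* 1_{B(t_0,\epsilon)}$). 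By Lemma \ref{intop}, $\tilde{K}$ is locally continuous at $t_0$; as $t_0$ was arbitrary, $\tilde{K}$ is continuous on $U$.

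For the covariance I would invoke the identity
\[
\cov(\tilde{K}(u_1), \tilde{K}(u_2)) = \lim_{\epsilon \pil 0} \bigbrak{ \frac{K^* 1_{B(u_1,\epsilon)}}{\mu(B(u_1,\epsilon))} , \frac{K^* 1_{B(u_2,\epsilon)}}{\mu(B(u_2,\epsilon))} }.
\]
The previous step shows each normalised factor converges in $L^2$ to $K(u_i,\cdot)$; since the inner product is jointly continuous, the limit of the inner products is the inner product of the limits, yielding $\brak{K(u_1,\cdot), K(u_2,\cdot)} = \int_U K(u_1,t) K(u_2,t) \de t$, as claimed.

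The main obstacle is the second paragraph: upgrading the evident pointwise convergence of the ball-averages to $L^2$-convergence. Bare continuity of the kernel would deliver only pointwise (and locally uniform) limits; it is precisely the combination of \emph{uniform} continuity, which furnishes a single $\delta$ valid for all $t$, with finiteness of $\mu(U)$, which converts the resulting $L^\infty$ estimate into an $L^2$ estimate, that closes the argument. Dropping either hypothesis would break the uniform bound or its integrability.
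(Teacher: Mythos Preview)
Your proof is correct and follows essentially the same route as the paper's: both establish that the ball averages $g_\epsilon$ converge \emph{uniformly} to $K(t_0,\cdot)$ via uniform continuity of the kernel, then use $\mu(U)<\infty$ to pass from the uniform bound to $L^2$-convergence, and finally read off the covariance from continuity of the inner product. Your write-up is arguably a touch cleaner (explicitly invoking joint continuity of $\brak{\cdot,\cdot}$ and noting $g\in L^2$), but the argument is the same.
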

\begin{proof}
	We show that the expression in \eqref{L2-graense} converge to $K(t, t_0)$.
	
	By uniform continuity it holds that for any $\rho > 0$ there is a $\epsilon_0$ s.t. $|K(t,s) - K(t, t_0)| < \rho$ for $s \in B(t_0, \epsilon)$, $t \in U$ and $\epsilon < \epsilon_0$. Then:
	\begin{multline*}
	\left| \mu(B(t_0, \epsilon))^{-1} \int_{B(t_0, \epsilon)} K(s,t) \de s - K(t_0, t) \right| = \\
	\left| \mu(B(t_0, \epsilon))^{-1} \int_{B(t_0, \epsilon)} K(s,t) -  K(t_0, t)\de s \right| < \rho
	\end{multline*}
	for all $t$. 
	Hence, $$\left|\left|t \mapsto \mu(B(t_0, \epsilon))^{-1} \int_{B(t_0, \epsilon)} K(t,s) \de s - K(t, t_0) \right|\right| < \rho \mu(U),$$ which proves the first part. 
	
	We have that, 
	\begin{equation}
	\cov(\tilde{K}(u_1) , \tilde{K} (u_2))  = \bigbrak{\lim_{\epsilon \pil 0} \frac{{K}^*{1_{B(u_1, \epsilon)}} }{\mu(B(u_1, \epsilon))}, 
		\lim_{\epsilon \pil 0} \frac{{K}^*1_{B(u_1, \epsilon)} }{\mu(B(u_2, \epsilon))}}
	\end{equation}
	The left part converges uniformly to the function $t \mapsto K(u_1, t)$, and the right part converges uniformly to $t \mapsto K(u_2, t)$. This gives the result. 
\end{proof}

For, functions on the unit interval, $L^2[0,1]$, an 
important class of operators are \emph{triangular operators}; that is, integral operators $K$ where the kernel satisfies $K(t,s) = 0 $ for $ s > t$. Since these in general not are continuous at the diagonal, Proposition \ref{kont-korr} does not apply in this case, although we get a similar result (proof omitted):

\begin{proposition} \label{triang-korr}
	Let $K$ be a triangular operator on $L^2[0,1]$ s.t. the kernel is (uniformly) continuous on the closed lower triangle $L = \{(x,y) \in [0,1]^2 | x \geq y \}$.
	Then $\tilde{K}$ is locally continuous for all $t_0 \in (0,1)$, and 
	$\cov(\tilde{K}(t_1) , \tilde{K} (t_2)) = \inte{0}{t_1 \wedge t_2}{K(t_1, t) K(t_2, t)}{t}$.
\end{proposition}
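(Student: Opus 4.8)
The plan is to verify the hypothesis of Lemma \ref{intop} directly, exploiting the triangular structure to identify the $L^2$-limit of the averaged adjoint kernel and to locate its single point of discontinuity. Fix $t_0 \in (0,1)$ and take $\epsilon$ small enough that $B(t_0, \epsilon) = (t_0 - \epsilon, t_0 + \epsilon) \subseteq (0,1)$, so $\mu(B(t_0, \epsilon)) = 2\epsilon$. Writing $h_\epsilon(t) = (2\epsilon)^{-1} \int_{t_0 - \epsilon}^{t_0 + \epsilon} K(s,t) \de s$, which equals $(K^* 1_{B(t_0,\epsilon)})(t)/\mu(B(t_0,\epsilon))$ by the adjoint identity used in Lemma \ref{intop}, the goal is to show that $h_\epsilon$ converges in $L^2[0,1]$ as $\epsilon \pil 0$.

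First I would use the triangular condition $K(s,t) = 0$ for $t > s$ to analyse $h_\epsilon$ according to the position of $t$ relative to $t_0$. For $t > t_0 + \epsilon$ every $s$ in the integration range satisfies $s < t$, so the integrand vanishes and $h_\epsilon(t) = 0$. For $t < t_0 - \epsilon$ the whole range lies in the region $s > t$ on which $K$ is continuous, and averaging a continuous function gives $h_\epsilon(t) \pil K(t_0, t)$, exactly as in the proof of Proposition \ref{kont-korr}. The transition band $|t - t_0| \leq \epsilon$ collapses to the single point $t = t_0$. Hence $h_\epsilon$ converges pointwise, off the null set $\{t_0\}$, to $g_{t_0}(t) = K(t_0, t) \, 1_{\{t < t_0\}}$, the row $t_0$ of the kernel truncated at the diagonal.

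Next I would upgrade pointwise convergence to $L^2$-convergence by dominated convergence. Since $K$ is continuous on the compact triangle $L$ it is bounded there, say $|K| \leq M$, and because $K$ vanishes off $L$ this bound passes to the averages: $|h_\epsilon(t)| \leq M$ for all $t$ and all small $\epsilon$. As the constant $M$ is square-integrable on $[0,1]$ and $h_\epsilon \pil g_{t_0}$ pointwise almost everywhere, we obtain $\| h_\epsilon - g_{t_0} \|_{L^2} \pil 0$. This is precisely the convergence required by Lemma \ref{intop}, which then yields local continuity of $\tilde{K}$ at $t_0$.

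Finally, for the covariance I would substitute the two limits into the inner-product formula attached to the definition of local continuity. Since $K^* 1_{B(t_i, \epsilon)}/\mu(B(t_i,\epsilon)) \pil g_{t_i}$ in $L^2$ and the inner product is continuous with respect to $L^2$-convergence, the covariance equals $\brak{g_{t_1}, g_{t_2}} = \int_0^1 K(t_1, t) K(t_2, t) \, 1_{\{t < t_1 \wedge t_2\}} \de t = \int_0^{t_1 \wedge t_2} K(t_1, t) K(t_2, t) \de t$. The main obstacle is the step before this: unlike Proposition \ref{kont-korr}, the limit $g_{t_0}$ is genuinely discontinuous at the diagonal, so the uniform convergence used there is unavailable and one must settle for the weaker $L^2$-convergence; the saving observation is that both the jump and the entire transition band are confined to a Lebesgue-null set, which is invisible to the $L^2$-norm.
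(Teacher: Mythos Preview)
The paper explicitly omits the proof of this proposition, so there is nothing to compare against directly. Your argument is correct and is the natural adaptation of the proof of Proposition~\ref{kont-korr}: you verify the hypothesis of Lemma~\ref{intop} by showing that the averaged adjoint kernel $h_\epsilon$ converges in $L^2[0,1]$ to $g_{t_0}(t) = K(t_0,t)\,1_{\{t < t_0\}}$. The key difference from Proposition~\ref{kont-korr}, which you identify correctly, is that the discontinuity of the kernel across the diagonal rules out uniform convergence on $[0,1]$; you recover $L^2$-convergence instead via pointwise convergence off $\{t_0\}$ together with the uniform bound $|h_\epsilon| \leq \sup_L |K|$ and dominated convergence. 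The covariance formula then follows from continuity of the inner product. This is almost certainly the argument the author had in mind when writing ``proof omitted''.
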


One of the main motivations for working with triangular operators is their association to  stochastic integrals. If we define the diffusion process $Y_t$ as
$$
Y_t = \int_0^t K(t,s) \de W_s,
$$
then $Y_t$ would yield the same covariance expression as in Corollary \ref{triang-korr}.

\begin{remark} \label{kont-remark}
	From Proposition \ref{kont-korr} it follows that the covariance function of $\tilde{K}$ as a function on $U \times U$ coincides with the kernel of $KK^*$, which we in Section \ref{gaus-ops} defined as the variance of $\tilde{K}$. 
	Similarly holds for the triangular operators of Proposition \ref{triang-korr}.
	
	This results  links 'classical' Gaussian process  to our definition of a Gaussian process: the variance of
	$\tilde{K}$ is 'identical' when $\tilde{K}$ is viewed as a function $U \pil \R$.  
\end{remark}

\subsection{Examples of Gaussian processes on $L^2$-spaces}

\begin{eks}[Brownian motion] \label{bm-eks}
	The forward integral operator on $[0,1]$, which is a triangular operator, may be used to construct a Wiener process. Define $K$ as the integral operator with kernel: 
	\begin{equation}
	K(t,s) = \lambda 1_{s < t}
	\end{equation}	
	Then $\tilde{O}$ is the Wiener process with variance parameter $\lambda^2$. This can be seen by applying Corollary \ref{triang-korr}.
\end{eks}

This example illustrates how  the Brownian motion/Wiener process can be interpreted as 'cumulated Gaussian noise'. This is an intuitive way of viewing the Brownian motion that also applies to many people's understanding of the Brownian motion.

\begin{eks} \label{bb-noise}
	Let $O = K + \mathbf{I}  \in B(L^2[0,1])$, where $\mathbf{I}$ is the identity operator, and $K$ is an integral operator with kernel $K(s,t) = \min(s,t) - st$.  $K$ is the covariance for a Brownian bridge, so $O$ is the covariance operator for a Brownian bridge with noise. 
	Since $O$ is positive definite, it has a unique positive definite square root $R$ s.t. $O = R^2$, and $O$ would therefore be the covariance of $\tilde{R}$. It is known that the Brownian bridge $K$ has eigenfunctions $\{\sin k \pi t\}_{k = 1}^\infty$ and eigenvalues $\{k^{-2} \pi^{-2}\}_{k = 1}^\infty$. We can now construct $R$; it has the same eigenfunctions as $K$ and eigenvalues $\{\sqrt{1 + k^{-2} \pi^{-2}}\}_{k = 1}^\infty$.
	
	Note that $R$ is not compact and thus $\tilde{R}$ is an improper functional a.s.  
\end{eks}	
This example can generalized to any positive semidefinite operator, showing how to construct a Gaussian process from its covariance.  

\begin{eks}[Bivariate Gaussian processes] \label{bivar-gp}
	
	We can construct bivariate Gaussian processes on $L^2[U]$ by identifying $L^2[U; \R^2]$ with $L^2[U] \oplus L^2[U] \cong L^2[U_1 \cup U_2]$ where $U_1$ and $U_2$ are  disjoint copies of $U$. 
	
	Let $\tilde{O}$ be a Gaussian process on 	$L^2[U] \oplus L^2[U]$ where $O = \vektor{O_{11} & O_{12} \\O_{21} & O_{22}}$, $O_{11} , O_{12} , O_{21} , O_{22} \in \mathcal{B}(L^2[U])$ is the matrix decomposition. 
	The restriction of $\tilde{O}$ to functions in $ L^2[U_1 \cup U_2]$  with support in $U_1$ can be viewed as a Gaussian process $\tilde{O}_1: L^2[U] \pil \R$ with variance $O_{11}O_{11}^* + O_{12} O_{21}^*$. We can similarly define $\tilde{O}_2$, making the joint variable $(\tilde{O}_1, \tilde{O}_2)$ a bivariate Gaussian process. 
	It holds that:
	\[
	\cov(\tilde{O}_1(f), \tilde{O}_2(g)) = \brak{f, (O_{11} O_{12}^* + O_{21} O_{22}^*) g}, \quad f,g \in L^2[U]
	\]
	Here $O_{11} O_{12}^* + O_{21} O_{22}^* \in \mathcal{B}(L^2[U])$ is interpreted as the covariance between $\tilde{O}_1$ and $\tilde{O}_2$.
	
\end{eks}
The construction in Example \ref{bivar-gp} can  be extended to higher dimensions, allowing for general multivariate Gaussian processes. 

\begin{eks}[Ornstein-Uhlenbeck process]
	The triangular operators of Section \ref{int-sec} are defined on the unit interval, or more precisely $L^2[0,1]$. However, with a little care we can define triangular operators on $L^2[\R]$ and use it to construct a stationary Markov Gaussian process on $L^2[\R]$. 
	
	Let $O \in \mathcal{B}(L^2[\R])$ be the integral operator with kernel 
	$$ O(t,s) = \alpha e^{\lambda (t-s)} 1_{t > s}$$
	where $\alpha, \lambda > 0$. Then $\tilde{O}$ is locally continuous and 
	$$\cov(\tilde{O}(t_1), \tilde{O}(t_2)) =  \tfrac{\alpha^2}{2\lambda} e^{-\lambda|t_1 - t_2|}$$
	which is the covariance function for the \emph{Ornstein-Uhlenbeck process}. \medskip
	
	Note that although 	 $\tilde{O}$ is locally continuous,
	it is a.s.\ not a proper linear functional and the {Ornstein-Uhlenbeck process} is not an element in $L^2[\R]$. In fact, $O$ is not even compact. 
\end{eks}

\section{Observational data and inference} \label{inference-sec}
Despite the continuity assumptions often assumed for stochastic process models, most observed (discrete) data are in fact noisy and must go through some kind of pre-processing to output a continuous function. In this section we discuss inference and the association of (discretely) observed data to the presented framework, emphasizing that noise is not considered a nuisance when the covariance operator is non-compact. 

We will sketch a general framework for statistical inference for a general Hilbert space $\Hi$, before continuing to the special case of $L^2[0,1]$ in Section \ref{l201}. 
\medskip

Data from time series and FDA usually come as one or more families of discrete observations. For simplicity, we will here assume one such family, $\{(u_k, y_k)| u_k \in U, y_k \in \R\}_{k = 1}^m$ which has been centered, ie. $\E[y_k] = 0$.

If we let $\tilde{O}$ be a Gaussian process on the Hilbert space $L^2[U]$, then $\tilde{O} (\omega)$ is an (improper) functional and so  takes {functions} as inputs, not numbers $x \in U$. 
If we were to define $y_k = \tilde{O}(u_k)$, this would require  $\tilde{O}$ to be locally continuous. 
We shall therefore make a functional embedding(s) of $y_k$; ie. $y_k \mapsto f_k y_k \in L^2[U]$. Here we for instance could choose $f_k$ to be indicator functions, so the embedding would make a piecewise constant approximation of $y$.

\subsection{Inference}
For our inferential framework, we shall require the following elements: (1) a statistical model, which is a family of positive (semi-)definite operators $\mathcal{K} \subseteq B(\Hi)$,
(2) a linear embedding of data into a function $f \in L^2[U]$,
$$
f = \sum_{k = 1}^{m} f_k y_k,
$$
and (3) a log-likelihood expression on the form 
\begin{equation} \label{loglik-gen}
Q(f, K) - \phi(K), \quad K \in \mathcal{K}, f \in L
\end{equation}
where $Q(\cdot, K)$ is a quadratic form in $f$. 
\medskip

The 'faithful' choice for \eqref{loglik-gen} would be to use the fact that $\cov(\tilde{O}(f_k), \tilde{O}(f_l)) = \brak{O^* f_k, O^* f_l}$. 
This would allow us construct a variance matrix for $\mathbf{y} = (f_1y_1, \dots , f_my_m)$, so that the  multivariate log-likelihood fits into  \eqref{loglik-gen}.

However, there are many other intuitive choices for \eqref{loglik-gen}, which also depend on the model. One might want to use the inverse of $\tilde{O}$ (if invertible), analogous to use of the inverse covariance matrix an standard multivariate analysis. 

Smoothing and penalization can be achieved by the choice of the $f_k$s and $Q$ and $\phi$. The choice of penalization in the context of smoothing is a delicate matter with good discussions in   \cite{berlinetAgnan} and \cite{ramsay}. 
In principle, any suitable embedding can be used for statistical inference. The default choice would be to use indicator functions; ie. $f_k = 1_{A_k}$, where $(A_k)_{k=1}^m$ is  an equipartition of $U$, such that $u_k \in A_k$. 
There are other interesting alternatives such as radial basis functions and piecewise-linear embeddings. 

For multivariate Gaussian random variables with variance $M$, the penalisation term of the usual log-likelihood expression is given by $\log\det M$. For operators, we do not have a similar concept. This is a challenge when defining and discussing inference for Gaussian processes. In Section \ref{l201} we introduce the Fredholm determinant, used in functional analysis and mathematical physics.


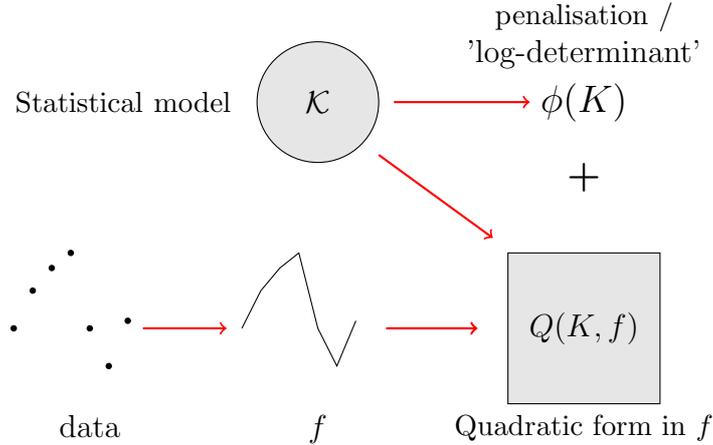
\begin{figure} \label{log-lik-fig}
	\[
	\begin{tikzpicture}
	\filldraw 
	(0,1) circle (1pt)
	(0.25,1.5) circle (1pt)
	(0.5,1.8) circle (1pt)
	(0.75,2) circle (1pt)
	(1,1) circle (1pt)
	(1.25,0.5) circle (1pt)
	(1.5,1.1) circle (1pt)
	(1, 0) node[below] {data};
	
	\draw (3,1) -- (3.25, 1.5) -- (3.5, 1.8) --
	(3.75, 2) -- (4, 1) -- (4.25, 0.5) -- (4.5, 1.1)
	(4, 0) node[below] {$f$};
	
	\draw [fill=gray!20, draw=black] (4, 4) circle[radius=0.8]
	(4, 4) node[circle, radius=0.8] (k) {}
	(4,4) node {${\mathcal{K}}$}
	(3,4) node[left, align=center] {\small Statistical model}
	
	(6.5, 0) rectangle(8.5, 2)
	
	(7.5, 1) node{ $Q(K, f)$ }
	(7.5, 0) node[below] {\small Quadratic form in $f$}
	(7.5, 4) node (k) {\large $\phi(K)$}
	(7.5, 4.3) node[above, align = center] {\small penalisation / \\ 'log-determinant'}
	(7.5, 3) node{\large \textbf{+}}
	;
	\draw[->, red,thick] (1.7 , 1) -- (2.8, 1);
	\draw[->, red,thick] (4.9 , 1) -- (6.1, 1);
	\draw[->, red,thick] (4.9 , 1) -- (6.1, 1);
	\draw[->, red,thick] (4.8, 3.3) -- (6.3, 2.2);
	\draw[->, red,thick] (5, 4) -- (k);
	\end{tikzpicture}
	\]
	\caption{Illustration of log-likelihood procedure, here with a linear interpolation and a single datum}
\end{figure}

\subsection{The functional nature of discrete data} 
Most time series data, including functional data, are fundamentally some kind of averages: for example, daily temperatures are averages, and measurement devices such as cameras require some time (the exposure time) to collect light. 
The science of measurement devices is not within scope of this article, but this illustrates that most time series data are in fact an underlying continuous process integrated wrt.\ some kernel: that is, our observed time series is actually applications of a \emph{functional}, which we here identify with a random variable. This is fundamentally a philosophical question, and one could argue that the arrow pointing from data to $f$ in Figure \ref{log-lik-fig} should actually be reversed.

\paragraph{Noise and signal} 
We remark that noise is an inherent feature of almost any kind of data, and that the signal-to-noise ratio decreases with increased resolution of observations.
This fits directly into the presented framework: the noise part corresponds to a diagonal (or multiplication) operator, whereas the signal part corresponds to an integral operator.

\section{Functional statistical modelling/inference on $L^2[0,1]$} \label{l201}
Here we focus on statistical inference for
the special case of $L^2[0,1]$, which is easily extended to $L^2[a,b]$ for general $a,b \in \R, a < b$.
In the following,  the term \emph{multivariate log-likelihood} (\mvl) shall refer to (twice) the negative log-likelihood for a zero-mean Gaussian random variable, ie.
$$
y \trans M^{-1} y + \log \det M
$$
where $y \sim N(0, M)$.

For the remainder of this section, let $O$ be a self-adjoint, invertible operator such that $O$ decomposes $O = D (\mathbf{I} + K) D$, where $D$ is a multiplication operator and $K$ is a self-adjoint integral operator with a continuous kernel. $D^2$ corresponds to the noise of the process and $DKD$ to the serial correlation. 

We may extend the mv. logL to functional data on $L^2[0,1]$ in a natural way using the following functional log-likelihood:
\begin{equation}
l = \brak{f, O^{-1} f} + \inte{0}{1} {\log D(t)}{t} + d(K) \label{01-pen-f}
\end{equation}
where $d(K)$ is the \emph{Fredholm determinant} of $K$. The Fredholm determinant was introduced by \cite{fredholm1903} as a criterion for solvability of integral equations and generalises the 'usual' determinant. The Fredholm determinant is defined by
$$
d(K) = \sum_{k=0}^\infty \frac{1}{n!} \int_a^b \dots \int_a^b \det [K(x_p, x_q) ]_{p,q = 1}^n \de x_1 \dots \de x_n
$$
and has the property that $I + K$ is invertible iff $d(K) \neq 0$. We see that the integral term in \eqref{01-pen-f} resembles the log-determinant of a diagonal matrix, and thus \eqref{01-pen-f} is the natural way to extend the multivariate log-likelihood to a functional log-likelihood. 

Expect for very basic cases, the Fredholm determinant is obviously very hard to calculate. However, a number of numerical methods exist; an extensive treatment can be found in \cite{bornemann}. In our work, we focus on the basic approximation by a matrix, because this corresponds to the covariance of a discretely observed Gaussian process, linking the operatorial framework to the standard inference for multivariate Gaussian random variables.

However, $d(K)$ as used in \eqref{01-pen-f} 'overpenalises' $K$ in comparison to multivariate 	log-likelihood, and this formulation is not asymptotically consistent with the matrix formulation even with the right normalisations, as shown in Example \ref{d+1} below.
This can be corrected by adjusting $d(K)$ by $1/n$: %
\begin{equation} \label{log-l-korrekt}
l = \brak{f, O^{-1} f} + \inte{0}{1} {\log D(t)}{t} + \tfrac{1}{n} d(K) 
\end{equation}
It should be noted that we use a different asymptotics wrt. the multivariate formulation than usual (see Theorem \ref{asymp-thm}).

\begin{eks} \label{d+1}
	The simplest example of a statistical model which mixes integral and multiplication operators is the class of operators 
	$$
	{K} = \alpha (\mathbf{I} + \delta \mathbf{1}), \quad \alpha > 0, \delta \geq 0
	$$
	on the space $L^2[0,1]$, where $\mathbf{1}$ is the integral operator with kernel $\mathbf{1}(s,t) \equiv 1$. This corresponds to a noise of variance $\alpha$ and a 'common' covariance of $\alpha \delta$, ie. a mixed model.	
	
	The inverse of $K = \alpha (\mathbf{I} + \delta \mathbf{1})$ is easily seen to be $\alpha^{-1} (\mathbf{I} - \frac{\delta}{\delta + 1} \mathbf{1})$. Following \eqref{01-pen-f}, the log-likelihood expression becomes
	\begin{equation}
	L(\alpha, \delta; f) = \brak{f, \alpha^{^-1} f - \frac{\delta}{\alpha(\delta + 1)} \mathbf{1} f} + \log(1 + \delta) + \log \alpha \label{L-1plusI}
	\end{equation}
	This can be optimized in an iterative procedure. For a fixed $\alpha$, the optimal $\delta$ is given by $\hat{\delta} = \alpha^{-1} \brak{f, \mathbf{1} f} -1$. 	
	
	When using an equidistant piecewise-constant embedding of data, the corresponding multivariate Gaussian model is the mixed model where
	$$
	\cov(X_i, X_j) = \begin{cases}
	\alpha(1 + \delta / n) & i = j \\ \alpha\delta/n & i  \neq j
	\end{cases}
	$$ 
	For this model one can show that the mv. logL is given by 
	$$
	l = \alpha^{-1} \sum_{i=1}^n x_i^2 - \alpha^{-1} \frac{\delta / n}{\alpha(\delta + 1)} (\sum_{i = 1}^n x_i)^2 + \log(1 + \delta) + n \log \alpha
	$$
	However, for a fixed $\alpha$, the optimal $\delta$ is given by $\hat{\delta} = \alpha^{-1}n^{-2} (\sum_{i = 1}^n x_i)^2  -1/n$ which converges to an unbiased estimator when $n \pil \infty$. 
	The term $(\sum_{i = 1}^n x_i)^2$ corresponds to $\brak{f, \mathbf{1} f}$, showing that the two estimators for $\delta$ differ by $\frac{n-1}{n}$.
\end{eks} 
The example above argues that the 'naive' estimator defined by \eqref{01-pen-f} is a bad choice; the bias on $\delta$ persists as $n \pil \infty$. 
The alternative \eqref{log-l-korrekt} works better, and asymptotically, the 'standard' multivariate likelihood and the functional likelihood \eqref{log-l-korrekt}  are equivalent as described in the following theorem:

\begin{thm} \label{asymp-thm}
	Let $O$ be a positive definite operator on $L^2[0,1]$ s.t. $O$ decomposes $O = D(I+K)D$. Here  $D$ is a multiplication operator such that $D(t)$ is continuous on $[0,1]$ with $0 < D_{\inf} < D(t) < D_{\sup} < \infty$	for all $t \in [0,1]$, and 
	%
	$K$ is assumed to be an integral operator with positive, symmetric and continuous kernel.
	
	Let $M_n$ be the finite matrix approximation of $O$, that is 
	$$
	(M_n)_{i,j} = \tfrac{1}{n} D(\tfrac{i- 1/2}{n}) K(\tfrac{i- 1/2}{n}, \tfrac{j- 1/2}{n}) D(\tfrac{j- 1/2}{n}) + D(\tfrac{i- 1/2}{n})^2 1_{i=j}
	$$
	Furthermore, let $y_n$ be a bounded sequence of vectors of length $n$, and define $f_n \in L^2[0,1]$ to be the piecewise-constant embedding of $y_n$ in $L^2[0,1]$:
	$$
	f_n(t) = (y_n)_i \for t \in (\tfrac{i-1}{n}, \tfrac{i}{n})
	$$	
	Then 
	$$
	\tfrac{1}{n}(y_n\trans M_n^{-1} y_n + \log\det M_n) - l(f_n, O)  \pil 0 \quad \for n \pil \infty
	$$
	In detail, we have the three following convergences:
	\begin{align*}
	\tfrac{1}{n}y_n\trans M_n^{-1} y_n - \brak{f_n, O^{^-1} f_n} &\pil 0 \\
	\log\det R_n &\pil d(K) \\
	\tfrac{1}{n}\log\det S_n &\pil \inte{0}{1}{\log D(t)}{t} ,
	\end{align*}
	where $S_n$ and $R_n$ are the following decomposition of $M_n$; $M_n = S_n R_n S_n$: 
	$$
	(R_n)_{i,j} = \tfrac{1}{n} K(\tfrac{i- 1/2}{n}, \tfrac{j- 1/2}{n}) + 1_{i=j}, \quad
	(S_n)_{i,j} = D(\tfrac{i- 1/2}{n}) 1_{i=j}
	$$
	Here $R_n$ corresponds to the integral operator to $K$, and $S$ corresponds to the multiplication operator $D$.
\end{thm}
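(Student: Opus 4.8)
The plan is to prove the three displayed limits separately; the single displayed convergence is then their sum. Throughout I would use that $O = D(\I+K)D$ is bounded and positive definite, so there are constants $0 < c \le C$ with $c\,\I \le O \le C\,\I$, and that the same two-sided bound holds uniformly in $n$ for the matrices $M_n$, whose entries are midpoint samples of the kernel of $O$. This uniform coercivity keeps every inverse bounded and is used repeatedly. I would also record the embedding identity $\brak{E_n y, E_n z} = \tfrac1n y\trans z$ for the piecewise-constant embedding $E_n \colon \R^n \to L^2[0,1]$; in particular $\brak{f_n,f_n} = \tfrac1n \|y_n\|^2$, which is the origin of the $\tfrac1n$ normalisation in the statement.

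The multiplication term is immediate: as $S_n$ is diagonal, $\log\det S_n = \sum_{i=1}^n \log D(\tfrac{i-1/2}{n})$, so $\tfrac1n \log\det S_n$ is exactly the midpoint Riemann sum of $\int_0^1 \log D(t)\de t$, which converges because $\log D$ is continuous on $[0,1]$ (here $0 < D_{\inf} \le D \le D_{\sup} < \infty$ is essential). For the Fredholm term I would expand the finite determinant by the principal-minor (Plemelj--Smithies) formula,
\[
\det R_n = \det(\I + \tfrac1n \hat K_n) = \sum_{m=0}^n \tfrac{1}{n^m} \sum_{i_1 < \cdots < i_m} \det\!\big[K(t_{i_p}, t_{i_q})\big]_{p,q=1}^m, \qquad t_i = \tfrac{i-1/2}{n}.
\]
For each fixed $m$ the inner sum is a Riemann sum converging to $\tfrac{1}{m!}\int_{[0,1]^m} \det[K(x_p,x_q)]_{p,q=1}^m \de x_1 \cdots \de x_m$, the $m$-th term of the Fredholm series $d(K)$. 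To interchange limit and summation I would dominate uniformly in $n$ by Hadamard's inequality, $|\det[K(t_{i_p},t_{i_q})]| \le m^{m/2}\|K\|_\infty^m$, so that $\sum_m m^{m/2}\|K\|_\infty^m / m!$ is a convergent majorant and dominated convergence over $m \in \N_0$ applies. This gives $\det R_n \to d(K)$; since $\I + K$ is positive definite, $d(K) > 0$, and taking logarithms yields $\log\det R_n \to \log d(K)$, the log-Fredholm-determinant entering the likelihood.

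The quadratic-form limit is the heart of the argument. Let $w_n = O^{-1} f_n$ be the continuum solution and $z_n = M_n^{-1} y_n$ the discrete one; using $\brak{E_n y, E_n z} = \tfrac1n y\trans z$ it suffices to show $\|E_n z_n - w_n\|_{L^2} \to 0$, after which the two quadratic forms differ by at most $\|f_n\|\,\|E_n z_n - w_n\| \to 0$ by Cauchy--Schwarz (and $\|f_n\|^2 = \tfrac1n\|y_n\|^2$ stays bounded because $y_n$ is bounded). I would compare $z_n$ with the midpoint sample $P_n w_n$ of the continuum solution: since $M_n$ is the midpoint-quadrature (Nyström) discretisation of $O$, consistency gives that the residual $M_n P_n w_n - P_n O w_n$ vanishes in the scaled norm $\tfrac{1}{\sqrt n}\|\cdot\|_2$, while $P_n O w_n = P_n f_n = y_n = M_n z_n$; the uniform lower bound $M_n \ge c\,\I$ (stability, giving $\|M_n^{-1}\|_2 \le c^{-1}$) then forces $\tfrac{1}{\sqrt n}\|z_n - P_n w_n\|_2 \to 0$, and finally $E_n P_n \to \mathrm{Id}$ on $L^2$ gives $E_n z_n \to w_n$.

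I expect the consistency estimate to be the genuinely delicate point: the piecewise-constant embedding converges only in $L^2$, not uniformly, so the quadrature error of $M_n$ against $O$ must be controlled in the $L^2$/Hilbert--Schmidt topology rather than in the sup-norm setting in which Nyström methods are usually analysed, and one must track separately the piecewise-constant and continuous parts of $w_n = D^{-2} f_n + D^{-1}\tilde K (D^{-1} f_n)$ arising from the resolvent $(\I+K)^{-1} = \I + \tilde K$. Once the three limits are established, they are added, using $\log\det M_n = 2\log\det S_n + \log\det R_n$ from $M_n = S_n R_n S_n$ together with $\tfrac1n \log\det R_n \to 0$ and $\tfrac1n d(K) \to 0$, to assemble the single displayed convergence, completing the proof.
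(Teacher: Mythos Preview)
Your proposal is correct and follows the same three-part decomposition as the paper. The diagonal term and the Fredholm term are treated essentially identically: a midpoint Riemann sum for $\tfrac1n\log\det S_n$, and the von~Koch / principal-minor expansion of $\det R_n$ together with a dominated-convergence argument over the order $m$ of the minors. Your use of Hadamard's inequality to obtain a summable majorant is a minor variation on (and arguably cleaner than) the paper's direct estimate of the difference of determinants via the Leibniz expansion.

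For the quadratic-form limit you take a genuinely different route. The paper introduces the piecewise-constant operator approximation $O_n = D_n(\I + K_n)D_n$ on $L^2[0,1]$, obtained by freezing $D$ and the kernel of $K$ at midpoints on each block, and checks the \emph{exact} identity $y_n\trans M_n^{-1} y_n = n\,\brak{f_n, O_n^{-1} f_n}$. Everything then reduces to showing $\|O_n - O\|\to 0$ in operator norm (immediate from uniform continuity of $D$ and $K$ on $[0,1]$) and a resolvent bound to pass to inverses. Your Nystr\"om consistency--stability argument is sound and would go through with the bookkeeping you outline, but the delicate point you correctly flag --- controlling midpoint quadrature on the merely piecewise-continuous function $w_n = O^{-1}f_n$, which itself varies with $n$ --- is precisely what the paper's device avoids: by lifting $M_n$ to an operator that acts \emph{exactly} on the piecewise-constant subspace containing $f_n$, the discrete and continuous quadratic forms coincide before any limit is taken, and no quadrature error on rough functions ever appears. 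Your approach buys a more classical numerical-analysis framing; the paper's buys a shorter proof.
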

\begin{proof}
	See appendix.
\end{proof}
Regarding the asymptotics, we should  think of the amount of information in data.  We also remark that $f_n \pil f$ for some $f \in L^2[0,1]$ is not considered important here, and so we do not impose any restrictions on $y_n$, besides being bounded. This is due to the remarks in Sections \ref{gw-basic} and \ref{gaus-ops} that instances of $O$ cannot be identified with a proper function(al) when $O$ is not compact.  However, we do actually have
$$
f_n \stackrel{L^2}{\pil} f\quad \Longrightarrow \quad l(f_n) \pil l(f) \quad  \for n \pil \infty 
$$
where $l$ is given by \eqref{L-1plusI}.

It should be remarked that Theorem \ref{asymp-thm} does not imply consistency for parameters of the statistical model.  
This would also be true even if the signal were fully observed, see e.g. \cite{sorensen_stoc} for examples.

\begin{eks} \label{bm-stoej} 
	Now consider \emph{Brownian motion with noise}, which can be viewed as an extension of Examples \ref{bm-eks} and \ref{d+1}.
	The statistical model is the class of operators on $L^2[0,1]$ given by
	$$
	K = \alpha^2 (\mathbf{I} + \lambda^2 \mathbf{B}), \quad \alpha, \lambda > 0
	$$
	where $\mathbf{B}$ is the integral operator with kernel $B(s,t) = \min(s,t)$.
	
	Some tedious calculations will show that the Fredholm determinant is given by:
	$$
	d(\lambda^2 \mathbf{B}) = \sum_{k = 0}^{\infty} \frac{1}{(2k)!} \lambda^{2k} = \cosh(\lambda)	
	$$
	so following \eqref{01-pen-f}, the log-likelihood becomes:
	$$
	\brak{f, K^{-1} f} + \log \alpha + \tfrac{1}{n} \log \cosh \lambda, \quad f \in L^2[0,1]
	$$
	where $n$ is the number of observations (which alternatively may be interpreted as a penalisation parameter).
	
	If $f \in L^2[0,1]$, one can show that $\brak{f, K^{-1} f}$ becomes $\alpha^{-2}\inte{0}{1}{g(t)^2}{t}$ where $g(t)$ is given by the forward integral equation:
	\[
	g(t) = f(t) - \lambda \inte{0}{t} {\tanh(\lambda s) g(s)} {s}
	\]
	Up to approximation error, $g(t)$ can be calculated in linear time of the number of grid points of the integral, which allows for fast calculations of the likelihood.
	
\end{eks}

We leave the (remaining) details of statistical modelling for future work, but note that two promising classes of operators for modelling are multiplication operators and triangular operators.
Together, these classes constitute an algebra, and many statistical models including those of Examples \ref{d+1} and \ref{bm-stoej} can be decomposed into such operators. 

\section{Discussion} \label{sect-disc}
This work presents a new direction for the theory of infinite-dimensional Gaussian random variables, which can be utilized in new and existing statistical models. 
Deviating from the traditional approach that a Gaussian process should be a function in a Hilbert space opens new possibilities, for instance that the covariance can be any operator on said Hilbert space.

From the modest background space of an \iid sequence of $N(0,1)$ random variables,   we have defined a comprehensive framework for Gaussian processes.
This encompasses not only standard continuous Gaussian processes and deterministic stochastic integration, but also Gaussian 'noise', all which fit well into the inferential framework presented in Section \ref{inference-sec}. 
As demonstrated in Proposition 5 and Remark 9, Gaussian processes are generally not proper functionals, not even for compact operators. However, we also saw that we still can define the 'classical' value of a Gaussian process in many cases, with a similar variance expression cf. Remark \ref{kont-remark}.

Stochastic models are more interesting when we can use them in a statistical setting as discussed in Sections \ref{inference-sec} and \ref{l201}. Although we in this work consider instances of Gaussian processes to be
improper functionals, we admittedly use functions for inference, where we here advocate for the embedding of discrete data into a piecewise constant function. 
More can be elaborated on statistical applications and inference, we leave this for future work. 

The asymptotical result in Theorem \ref{asymp-thm} is quite unusual in sense that we let the signal-to-noise ratio decrease. Had this ratio remained constant, we would have been able to fully separate the signal from the noise as the number of observations increased to infinity. Instead, we 'balance' the amount of information between signal and noise, a different perspective for the interpretation of observed data.

To the author's best knowledge, this is the first work to propose Fredholm determinants for 'log-likelihood' expressions as done in Eqs.\ \eqref{01-pen-f} and \eqref{log-l-korrekt}. However, as discussed,  the 'pure' form \eqref{01-pen-f} is biased and does not align with standard multivariate theory, and we therefore use the corrected form \eqref{log-l-korrekt}.
We leave a discussion of other correction methods for future work.

Finally, it should be remarked that Gaussianity is only crucial when establishing the independence of the choice of basis, and it will certainly be possible to generalize much of this work to non-Gaussian processes.

\section*{Acknowledgements}
I am grateful to Associate Professor Bo Markussen (University of Copenhagen) for valuable ideas and comments to this manuscript. 

\bibliographystyle{plain}

\appendix

\section{Proof of Theorem \ref{asymp-thm}}
We show each of the three claims of with a lemma.

\begin{lemma}  \label{fredholm-konv}
	We want to show $\log\det R_n \pil d(K)$ for $n \pil \infty$.

Let $R_n = \mathbf{I} + Q_n$. By the von Koch formula, 
\begin{equation}
\det R_n  = \det(I + Q_n) = \sum_{k = 0}^\infty \frac{1}{k!}  \sum_{i_1, \dots, i_k} \det [(Q_n)_{i_p, i_q}]_{p,q = 1}^k
\end{equation}
where ${i_1, \dots, i_k} \in \{1, \dots ,n\}$. The determinants inside the sum terms are non-zero only if all indices are different (due to colinearity). 	
	
Define $K_n$ to be the integral operator with a piecewise constant kernel given by
$$
K_n(s,t) = K(\tfrac{1}{2n} + \tfrac{i-1}{n} , \tfrac{1}{2n} + \tfrac{j-1}{n} )
\quad \for s \in (\tfrac{i-1}{n}, \tfrac{i}{n}], 
t \in (\tfrac{j-1}{n}, \tfrac{j}{n}]
$$
Let $d_{nk}$ denote the $k$'th term  of the Fredholm determinant of $K_n$. 
We have that
$$d_{nk} = \int_{[0,1]^n} \det [K_n(x_p, x_q) ]_{p,q = 1}^k \de x_1 \dots \de x_k.$$
As $K_n$ is piecewise constant, the integrand is non-zero only if $x_1, \dots , x_k$ belong to different intervals on the form $(\tfrac{i-1}{n}, \tfrac{i}{n}]$. From this it easily follows that
$$
d_{nk} = \frac{1}{k!} \sum_{i_1, \dots, i_k} \det [(Q_n)_{i_p, i_q}]_{p,q = 1}^k
$$	
proving that the Fredholm determinant of $K_n$ is given by $d(K_n) = \det{R_n}$.
Now, if we can show that the k'th term of $d_{nk}$ converge to the $k$'th term, $d_k$, of $d(K)$ for all $k$, then $d(K_n) \pil d(K)$ by the dominated convergence theorem.

Since $K$ is continuous on a compact set, there exists a sequence $(\epsilon_n)_{n=1}^\infty, \epsilon_n \pil 0$ such that $|K_n(s,t) - K(s,t) | < \epsilon_n$, and an $R$ s.t. $|K(s,t)| < R$. Then 
\begin{multline*}
| \det [K(x_p, x_q) ]_{p,q = 1}^k - \det [K_n(x_p, x_q) ]_{p,q = 1}^k | = \\
\left|\sum_{\sigma \in S(k)} \left( \prod_{p=1}^k K(x_p, x_{\sigma(p)}) - 
\prod_{p=1}^k K_n(x_p, x_{\sigma(p)}) \right) \right| < \\
\sum_{\sigma \in S(k)} (R + \epsilon_n)^k - R^k
\end{multline*}
which tends to zero as $n \pil 0$. From this it follows that $d_{nk} \pil d_k$, showing the result. 
\end{lemma}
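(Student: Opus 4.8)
The plan is to show that the determinant $\det R_n$ converges to the Fredholm determinant $d(K)$ by identifying it exactly with the Fredholm determinant of a piecewise-constant approximation $K_n$ of the kernel, and then controlling the error as the grid refines. Writing $R_n = \I + Q_n$ with $(Q_n)_{ij} = \tfrac1n K(\tfrac{i-1/2}{n},\tfrac{j-1/2}{n})$, I would first expand $\det(\I + Q_n)$ by the von Koch (Plemelj--Smithies) formula as $\sum_{k\geq 0}\tfrac{1}{k!}\sum_{i_1\dotcomma i_k}\det[(Q_n)_{i_p,i_q}]_{p,q=1}^k$, observing that any term with a repeated index has two equal rows and so vanishes; thus only strictly distinct index tuples contribute.

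The central device is the kernel $K_n$ that is constant on each cell $(\tfrac{i-1}{n},\tfrac{i}{n}]\times(\tfrac{j-1}{n},\tfrac{j}{n}]$, taking there the midpoint value $K(\tfrac{i-1/2}{n},\tfrac{j-1/2}{n})$. Since $K_n$ is constant on cells, the $k$-fold integral defining the $k$-th Fredholm term of $K_n$ decomposes according to which cells the arguments $x_1\dotcomma x_k$ occupy. Contributions with two arguments in a single cell produce a determinant with two identical rows and vanish, while the surviving contributions reproduce exactly the minor sum above. This should give the clean identity $d(K_n) = \det R_n$, reducing the whole problem to proving $d(K_n) \pil d(K)$.

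For $d(K_n)\pil d(K)$ I would argue termwise. As $K$ is continuous on the compact square $[0,1]^2$, it is uniformly continuous and bounded, so there is a bound $|K|\leq R$ and a sequence $\epsilon_n \pil 0$ with $\sup|K_n - K|\leq \epsilon_n$. Expanding each $k\times k$ determinant over the symmetric group $S(k)$ and telescoping the differences of products of $k$ factors (each bounded by $R$, resp.\ $R+\epsilon_n$) gives the bound $|\det[K(x_p,x_q)]_{p,q=1}^k - \det[K_n(x_p,x_q)]_{p,q=1}^k| \leq k!\,((R+\epsilon_n)^k - R^k)$, which tends to $0$ for each fixed $k$; hence the $k$-th term of $d(K_n)$ converges to the $k$-th term of $d(K)$.

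The step I expect to require the most care is promoting this termwise convergence to convergence of the full series, since the Fredholm determinant is an infinite sum. I would supply a summable dominating bound that is uniform in $n$: by Hadamard's inequality, a $k\times k$ matrix with entries bounded by $R$ has $|\det|\leq k^{k/2}R^k$, so the $k$-th term of $d(K_n)$ is at most $\tfrac{1}{k!}k^{k/2}R^k$ uniformly in $n$ (using $|K_n|\leq R$), and $\sum_k \tfrac{1}{k!}k^{k/2}R^k<\infty$ by Stirling. Dominated convergence with respect to the counting measure in $k$ then yields $d(K_n)\pil d(K)$, and together with the identity $d(K_n)=\det R_n$ this gives $\det R_n \pil d(K)$, as desired.
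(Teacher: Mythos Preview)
Your proposal is correct and follows essentially the same route as the paper: identify $\det R_n$ with the Fredholm determinant $d(K_n)$ of the midpoint piecewise-constant kernel via the von Koch expansion, then pass to the limit termwise using uniform continuity of $K$. The only notable difference is that you actually supply the uniform-in-$n$ dominating sequence via Hadamard's inequality ($|d_{nk}|\leq \tfrac{1}{k!}k^{k/2}R^k$), whereas the paper invokes dominated convergence without exhibiting the majorant; in that respect your argument is slightly more complete.
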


\begin{lemma} \label{ld-diag} We want to show that $\tfrac{1}{n}\log\det S_n \pil \inte{0}{1}{\log D(t)}{t} $ for $n \pil \infty$. 
	
Observe that since $S_n$ is a diagonal matrix, $$
\log\det S_n = \sum_{i=1}^n \log S_{ii} = \sum_{i=1}^n \log D(\tfrac{i-1/2}{n}).
$$ 
As $D: [0,1] \pil \R$ is continous and bounded from below, the function $t \mapsto \log D(t)$ is uniformly continous for $t \in [0,1]$. Therefore, we can approximate the integral $\inte{0}{1}{\log D(t)}{t}$ by the midpoint Riemann sum, ie.
$$
\inte{0}{1}{\log D(t)}{t} = \lim_{n \pil \infty} \frac{1}{n}  \sum_{i=1}^n \log D(\tfrac{i-1/2}{n}).
$$
Now the result follows. 
 
\end{lemma}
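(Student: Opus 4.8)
The plan is to exploit that $S_n$ is diagonal, so its determinant factors as a product, which turns the log-determinant into a sum that I can recognise as a Riemann sum matching the target integral.

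First I would write out $\log\det S_n$ explicitly. Since $S_n$ is the diagonal matrix with entries $(S_n)_{ii} = D(\tfrac{i-1/2}{n})$, its determinant is $\prod_{i=1}^n D(\tfrac{i-1/2}{n})$, and taking logarithms converts the product into $\sum_{i=1}^n \log D(\tfrac{i-1/2}{n})$. Dividing by $n$ then exhibits $\tfrac{1}{n}\log\det S_n = \tfrac{1}{n}\sum_{i=1}^n \log D(\tfrac{i-1/2}{n})$ as precisely the midpoint Riemann sum of $\log D$ over the uniform partition of $[0,1]$ into $n$ cells, with the samples taken at the cell midpoints $\tfrac{i-1/2}{n}$.

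Next I would confirm that the integrand is admissible. The hypotheses supply $D$ continuous on $[0,1]$ with $0 < D_{\inf} \le D(t) \le D_{\sup} < \infty$; here the lower bound is the essential ingredient, since it keeps $\log D$ finite and continuous on all of $[0,1]$. A continuous function on a compact interval is uniformly continuous, so $\log D$ is uniformly continuous, which simultaneously guarantees that $\inte{0}{1}{\log D(t)}{t}$ exists and that its midpoint Riemann sums converge to it.

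Finally I would make the convergence quantitative to close the argument. Writing $\inte{0}{1}{\log D(t)}{t} = \sum_{i=1}^n \int_{(i-1)/n}^{i/n} \log D(t)\,dt$ and comparing cell-by-cell against the midpoint value, uniform continuity bounds the per-cell error by the modulus of continuity $\omega(1/n)$ times the cell width $1/n$; summing over the $n$ cells yields a total error at most $\omega(1/n) \to 0$. The main obstacle, such as it is, is entirely regularity bookkeeping rather than any genuine analytic difficulty: one must not overlook that the finiteness of $\log D$ rests on $D_{\inf} > 0$, and that it is \emph{uniform} (not merely pointwise) continuity that upgrades the pointwise Riemann-sum heuristic into honest convergence.
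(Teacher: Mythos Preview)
Your proof is correct and follows essentially the same route as the paper: reduce $\log\det S_n$ to the sum $\sum_i \log D(\tfrac{i-1/2}{n})$ via diagonality, then invoke uniform continuity of $\log D$ on $[0,1]$ (using $D_{\inf}>0$) to conclude that the midpoint Riemann sum converges to $\inte{0}{1}{\log D(t)}{t}$. The only difference is that you spell out the modulus-of-continuity estimate explicitly, which the paper leaves implicit.
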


\begin{lemma} We want to show that the quadratic forms which define the inverses converge, ie. $\tfrac{1}{n}y_n\trans M_n^{-1} y_n - \brak{f_n, O^{-1} f_n} \pil 0 $. 
	
	Let $O_n$ be the following approximation of $O$:
	$$
	O_n = D_n(I + K_n) D_n
	$$
	where $D_n$ and $K_n$ are defined as in Lemma \ref{ld-diag} and \ref{fredholm-konv}, respectively. 
	Since $D_n \pil D$ and $K_n \pil K$ in norm, we have that $||O - O_n || <\epsilon_n$ where $\epsilon_n \pil 0$ for $n \pil \infty$. 	

Now, observe by straightforward calculations that 
$$
Y_n\trans (M^n)^{-1} Y_n = n \brak{f_n, O_n^{-1} f_n}
$$
due to the construction of $O_n$ and $f_n$.	
For any $x \in L^2[0,1]$, we have the inequality:
$$
\begin{gathered}
(1 + \min K(s,t)) ||x|| \leq  ||(I + K) x|| \leq  (1 + \max K(s,t)) ||x|| \\
(1 + \min K(s,t)) ||x|| \leq  ||(I + K_n) x|| \leq  (1 + \max K(s,t)) ||x||
\end{gathered}
$$

Let $x \in L^2[0,1]$. There now exists $g, g_n \in L^2[0,1]$ s.t. $(K+I)g = (K_n + I) g_n = x$. Define $x' = (K+I) g_n$. Then, $||x - x'|| < \epsilon_n y_n < \frac{\epsilon_n}{(1 + \min K(s,t)) ||} ||x ||$, and we hvae that  
$$
||g_n - g|| \leq \frac{||x - x'||}{1 + \min K(s,t)} < \frac{\epsilon_n ||g_n||}{1 + \min K(s,t)} \leq
\frac{\epsilon_n ||x||}{(1 + \min K(s,t))^2} 
$$
which tends to zero as $n \pil \infty$. Therefore
$| \brak{f_n, (\mathbf{I} + O_n)^{-1} f_n} - \brak{f_n, (\mathbf{I} + O)^{-1} f_n} | < 
\frac{\epsilon_n}{(1 + \min K(s,t))^2}  ||f_n||^2$, As $f_n$ is bounded, this converges to zero for $n \pil \infty$, showing the result. 
\end{lemma}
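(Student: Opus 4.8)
The plan is to transfer the matrix quadratic form onto a finite-dimensional subspace of $L^2[0,1]$ and then apply a resolvent perturbation argument. Let $V_n \subseteq L^2[0,1]$ be the $n$-dimensional subspace of functions that are constant on each interval $(\tfrac{i-1}{n}, \tfrac{i}{n})$; the embedding $f_n$ lives in $V_n$, and for two elements of $V_n$ with coefficient vectors $y_n, z_n$ the $L^2$ inner product equals $\tfrac{1}{n} y_n\trans z_n$. Introduce the piecewise-constant approximation $O_n = D_n(I + K_n)D_n$ with $D_n, K_n$ the midpoint-sampled operators of Lemmas \ref{ld-diag} and \ref{fredholm-konv}. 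First I would verify that $O_n$ maps $V_n$ into itself and that its action on coefficient vectors is exactly multiplication by $M_n = S_n R_n S_n$: indeed $D_n$ acts as $S_n$, while $K_n$ sends a piecewise-constant function with coefficients $w$ to one with coefficients $\tfrac{1}{n}\tilde{K}_n w$, so $I + K_n$ acts as $R_n$. Since $O_n$ is positive definite and preserves $V_n$, so does $O_n^{-1}$, and there it acts as $M_n^{-1}$. This gives the exact identity
\begin{equation} \label{q-identity}
\brak{f_n, O_n^{-1} f_n} = \tfrac{1}{n} y_n\trans M_n^{-1} y_n ,
\end{equation}
reducing the claim to $\brak{f_n, (O_n^{-1} - O^{-1}) f_n} \pil 0$.

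Next I would establish operator-norm convergence $\|O_n - O\| \pil 0$, handling the two factors separately. Since $D$ is continuous on the compact interval $[0,1]$, it is uniformly continuous and $\|D_n - D\| = \sup_t |D_n(t) - D(t)| \pil 0$; since the continuous kernel $K$ is uniformly approximated by $K_n$, the Hilbert--Schmidt bound $\|K_n - K\|_{\mathrm{HS}}^2 = \iint |K_n - K|^2 \de s \de t \pil 0$ gives $\|K_n - K\| \pil 0$. Writing $O_n - O$ as a telescoping sum of the three factors and using that $D, D_n, I+K, I+K_n$ are uniformly bounded then yields $\|O_n - O\| \pil 0$.

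The third ingredient is uniform invertibility. Because $K$ has a positive kernel it is a positive operator, so $I + K \ge I$ and $\|(I+K)^{-1}\| \le 1$, and identically for $I + K_n$. Together with $D_{\inf} \le D(t) \le D_{\sup}$ (which $D_n$ inherits), this yields $\|O^{-1}\|, \|O_n^{-1}\| \le D_{\inf}^{-2}$ uniformly in $n$. The identity $O_n^{-1} - O^{-1} = O_n^{-1}(O - O_n)O^{-1}$ then gives $\|O_n^{-1} - O^{-1}\| \le D_{\inf}^{-4}\,\|O - O_n\| \pil 0$. Finally, boundedness of $y_n$ forces $\|f_n\|^2 = \tfrac{1}{n}\sum_i (y_n)_i^2$ to be bounded, so $|\brak{f_n, (O_n^{-1} - O^{-1}) f_n}| \le \|O_n^{-1} - O^{-1}\|\,\|f_n\|^2 \pil 0$, which combined with \eqref{q-identity} is the assertion.

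The step I expect to be the main obstacle is the exact identity \eqref{q-identity}: one must check that inverting commutes with restriction to $V_n$, i.e.\ $(O_n|_{V_n})^{-1} = O_n^{-1}|_{V_n}$, and that the $\tfrac{1}{n}$ appearing in the inner product and the $\tfrac{1}{n}$ built into $M_n$ are reconciled consistently across the $D_n$ and $K_n$ factors. Once this bookkeeping is correct, the remainder is a standard resolvent estimate, and the only analytic input is the norm convergence $\|O_n - O\| \pil 0$ established via uniform continuity of $D$ and $K$.
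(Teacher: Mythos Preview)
Your approach mirrors the paper's exactly: both establish the identity $\tfrac{1}{n}\,y_n\trans M_n^{-1} y_n = \brak{f_n, O_n^{-1} f_n}$ via the piecewise-constant approximation $O_n$, then reduce to showing $\|O_n^{-1} - O^{-1}\| \pil 0$ from norm convergence $O_n \pil O$ together with a uniform bound on the inverses. Your use of the resolvent identity $O_n^{-1} - O^{-1} = O_n^{-1}(O - O_n)O^{-1}$ is a cleaner packaging of what the paper does with its explicit $g, g_n$ construction, and your verification that $O_n$ preserves $V_n$ and acts there as $M_n$ is the ``straightforward calculation'' the paper alludes to.

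There is, however, one genuine gap in the uniform-invertibility step: a pointwise positive kernel does \emph{not} make $K$ a positive (semidefinite) operator. For instance $K(s,t) = 3 - 2\cos(2\pi s)\cos(2\pi t)$ is symmetric, continuous and everywhere $\geq 1$, yet $\brak{\phi, K\phi} = -1$ for $\phi(t) = \sqrt{2}\cos(2\pi t)$. Hence the inequality $I + K \geq I$ and the bound $\|(I+K)^{-1}\| \leq 1$ are not justified by the kernel positivity hypothesis as you have used it. The repair is easy: invoke instead the standing assumption that $O$ is positive definite. Then $I + K = D^{-1}OD^{-1}$ is positive definite with spectrum bounded below by some $c > 0$, so $\|(I+K)^{-1}\| \leq c^{-1}$; since $\|K_n - K\| \pil 0$, for large $n$ one has $I + K_n \geq \tfrac{c}{2}I$ and the uniform bound on $\|(I+K_n)^{-1}\|$ follows. (The paper's own argument stumbles at the same point, asserting $\|(I+K)x\| \geq (1 + \min K)\|x\|$, which fails for the same reason; your resolvent formulation makes the correct fix more transparent.)
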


\end{document}